\def\C{\mathcal C}
\def\Rz{\mathbb{R}}
\def\ff{\varphi}
\def\adj{\operatorname{adj}}
\def\cit#1#2{\cite[#1]{#2}}
\def\mo-{}
\def\sgn{\operatorname{sgn}}
\def\Nabla{D}
\def\amo-{}
\def\e{\mathbf e}
\def\ep{\varepsilon}
\def\er{\mathbb R}
\def\rn{\mathbb R^n}
\def\hauso{\mathcal H^{1}}
\def\haus{\mathcal H^{n{-}1}}
\def\hausk{\mathcal H^{k}}
\def\ve{\mathbf V}
\def\H{\mathcal{H}}
\def\hausd{\mathcal H^2}
\def\spt{\operatorname{spt}}
\def\BV{\textit{BV}}
\def\dive{\operatorname{div}}
\def\loc{\operatorname{loc}}
\def\Det{\operatorname{Det}}
\def\dist{\operatorname{dist}}
\def\eqn#1$$#2$${\begin{equation}\label#1#2\end{equation}}
\long\def\colred#1\endred{{\color{red}#1}}
\long\def\colgreen#1\endgreen{{\color{green}#1}}
\long\def\colmagenta#1\endmagenta{{\color{magenta}#1}}
\long\def\colblue#1\endblue{{\color{blue}#1}}
\newtoks\by
\newtoks\paper
\newtoks\book
\newtoks\jour
\newtoks\yr
\newtoks\pages
\newtoks\vol
\newtoks\publ
\def\ota{{\hbox\vol{???}}}
\def\cLear{\by=\ota\paper=\ota\book=\ota\jour=\ota\yr=\ota
  \pages=\ota\vol=\ota\publ=\ota}
\def\endpaper{\the\by, {\the\paper},
  \textit{\the\jour} \textbf{\the\vol} (\the\yr), \the\pages.\cLear}
\def\endbook{\the\by, \textit{\the\book}, \the\publ.\cLear}
\def\endprep{\the\by, \textit{\the\paper}, \the\jour.\cLear}
\def\endyearprep{\the\by, \textit{\the\paper}, \the\jour, (\the\yr).\cLear}
\def\name#1#2{#2 #1}
\def\nom{ \rm no. }
\def\et{ and}
\newtheorem{thm}{Theorem}
\newtheorem{prop}[thm]{Proposition}
\newtheorem{lemma}[thm]{Lemma}
\theoremstyle{definition}
\newtheorem{defin}[thm]{Definition}
\newtheorem{rmrk}[thm]{Remark}
\title{On $\BV$ homeomorphisms}
\author[L. D'Onofrio]{Luigi D'Onofrio}
\address{Dipartimento di Scienze e Tecnologie, Universit\`a degli Studi di Napoli ``Parthe\-nope", Centro Direzionale Isola C4, 80100 Napoli, Italy}
\email{donofrio@uniparthenope.it}
\author[J. Mal\'y]{Jan Mal\'y}
\address{Department of Mathematical Analysis, Faculty of Mathematics and Physics, \break Charles University, So\-ko\-lovsk\'a 83,  Prague 8, 186\,75 Czech Republic}
\email{maly@karlin.mff.cuni.cz}
\author[C. Sbordone]{Carlo Sbordone}
\address{Dipartimento di Matematica e Applicazioni ``R. Caccioppoli",
Universit\`a degli Studi di Napoli  ``Federico II",
Via Cintia, 80126 Napoli, Italy}
\email{sbordone@unina.it}
\author[R. Schiattarella]{Roberta Schiattarella}
\address{Dipartimento di Matematica e Applicazioni ``R. Caccioppoli",
Universit\`a degli Studi di Napoli  ``Federico II",
Via Cintia, 80126 Napoli, Italy}
\email{roberta.schiattarella@unina.it}
\thanks{J.M.\ has been supported by the grant GA\,\v{C}R 
P201/18-07996S of the Czech Science Foundation.
 L. D., C. S. and R. S. are members of the Gruppo Nazionale per l'Analisi Matematica, la Probabilit\'a e le loro Applicazioni (GNAMPA) of the Istituto Nazionale di Alta Matematica (INdAM). The research of R.S. has been supported by the grant PRIN Project 2017JFFHSH.}
\begin{document}
\maketitle

\begin{abstract}
    We  obtain the rectifiability of the graph of a bounded variation homeomorphism $f$ in the plane and relations
    between gradients of $f$ and its inverse.     
    Further, we show an example of a bounded variation homeomorphism $f$ in the plane which satisfies the $(N)$ and $(N^{-1})$ properties and strict positivity of Jacobian of  both itself and its inverse, but neither $f$ nor $f^{-1}$ is Sobolev.    
    \end{abstract}

\section{Introduction}

Let $\Omega\subset\rn$ be an open set and $f\colon\Omega\to\rn$
be a Sobolev or $\BV$ homeomorphism. Then it is natural to ask
under what condition the inverse is also Sobolev or $\BV$.
For planar Sobolev mapping it has been solved 
by Hencl and Koskela  \cite{HK}. Then Hencl, Koskela and Onninen
\cite{HKO} proved that the inverse of a planar $\BV$ homeomorphism
is also $\BV$. 
For first results in the spatial case see 
Hencl, Koskela and Mal\'y \cite{HKM},
Onninen \cite{O} and Hencl, Koskela and Onninen  \cite{HKO}.
The $\BV$-regularity of the inverse of $W^{1,n-1}$ mappings has been
obtained by Cs\"ornyei, Hencl and Mal\'y \cite{CHM} and even a more precise
result for $n=3$ by Hencl, Kauranen and Luisto \cite{HKaL}.

The above mentioned result when the inverse of a Sobolev mapping is also Sobolev
were accompanied by the formula for the 
the gradient of the inverse; in fact it is the same as
in the smooth case. The corresponding result for $\BV$ mappings
is more difficult
even in the planar case.
For $f=(f_1,f_2)\in\BV(\Omega,\er^2)$, the identity
$$
|D_1f^{-1}|(f(U))=|Df_1|(U),\qquad U\subset\Omega\text{ open}
$$
has been shown by D'Onofrio and Schiattarella  \cite{DS}.
We prove the full characterization of $Df^{-1}$:

\begin{thm}\label{t:identity}
Let $\Omega\subset\er^2$ be a bounded open set
and $f\colon\Omega\to\er^2$ be a bounded sense preserving $\BV$-homeomorphism.
Let $U\subset\Omega$ be a Borel set. 
Then \\
{\rm(a)}
$
Df^{-1}(f(U))=\adj Df(U),
$\\
{\rm(b)}
$
\Det Df(U)=|f(U)|.
$
\end{thm}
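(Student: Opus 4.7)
The plan is to reduce (a) to four scalar identities of signed Radon measures, then use sense-preservation and graph rectifiability to pin down the signs. Writing
$$Df=\begin{pmatrix} D_1 f_1 & D_2 f_1 \\ D_1 f_2 & D_2 f_2\end{pmatrix},\qquad \adj Df=\begin{pmatrix} D_2 f_2 & -D_2 f_1 \\ -D_1 f_2 & D_1 f_1\end{pmatrix},$$
(a) becomes four equalities such as $D_1(f^{-1})_1(f(U))=D_2 f_2(U)$ and $D_2(f^{-1})_1(f(U))=-D_2 f_1(U)$.

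First I would invoke the D'Onofrio-Schiattarella identity $|D_1 f^{-1}|(f(U))=|Df_1|(U)$ together with the three analogues obtained by permuting coordinates and components; this yields the total-variation versions $|D_j (f^{-1})_i|(f(U))=|(\adj Df)_{ij}|(U)$ of all four target identities. The task then reduces to matching signs. For this I would use the rectifiability of the graph $\Gamma_f\subset\er^4$ (established elsewhere in the paper): it equips $\Gamma_f$ with an approximate tangent 2-plane at $\H^2$-almost every point, and the sense-preserving hypothesis orients these planes coherently. The linear algebra of oriented 2-planes in $\er^4$ that are graphs of a linear map $L$ one way and $L^{-1}$ the other, together with $L^{-1}=\adj L/\det L$ and $\det L>0$, fixes the correct sign entry by entry. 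A Hahn-decomposition argument lifts this pointwise orientation statement to the required identity of signed measures.

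For part (b), I would deduce $\Det Df(U)=|f(U)|$ from (a) via a change-of-variables computation on $\Gamma_f$. In the smooth case $\int_U \det Df\,dx=|f(U)|$ by the classical formula; the extension uses the area formula on the 2-rectifiable graph, with sense-preservation ensuring the Jacobian factor is positive. Equivalently, the identity follows by combining (a) with the pointwise relation $Df\cdot\adj Df=(\det Df)\,I$ valid $\H^2$-a.e.\ on $\Gamma_f$, and integrating against the pushforward of Lebesgue measure from $\Omega$ to $f(\Omega)$.

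The main obstacle is the sign analysis on the singular part of $Df$, where pointwise differentiability fails and classical change-of-variables cannot be applied directly. Graph rectifiability is precisely the tool that replaces pointwise differentiability with an approximate tangent plane on which sense-preservation can orient a consistent sign, making the signed (rather than merely absolute) identity possible.
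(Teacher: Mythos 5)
Your proposal rests on two ideas that, as stated, cannot carry the argument in this paper.

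First, there is a circularity problem. You propose to use the rectifiability of the graph $\Gamma(\Omega)$ and the existence of a coherent orientation on its approximate tangent planes to pin down the signs. But in the paper the rectifiability theorem (Theorem~\ref{t:rect}) explicitly \emph{invokes} Theorem~\ref{t:identity} in its proof (the set $M^{12}$ is handled by ``passing to the inverse using Theorem~\ref{t:identity}''), and the current structure (Theorem~\ref{t:mainpart}) invokes Theorem~\ref{t:det}, which is Theorem~\ref{t:identity}(b). So rectifiability is logically downstream of the identity you are trying to prove; you cannot assume it without a separate, self-contained proof of rectifiability that avoids the identity, and you have not supplied one.

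Second, and more fundamentally, your sign-pinning mechanism fails exactly on the part of the graph where the result has content. At a $\sigma$-typical point of the ``vertical'' part of the graph (the set $M^*$ in the paper's notation, where both $\kappa_{12}$ and $\kappa^{12}$ vanish), the blow-up of $\mu$ is rank one, by Alberti's theorem. In your picture this means the ``linear map $L$'' at such a point is singular: $\det L=0$, so $L^{-1}=\adj L/\det L$ is meaningless, and the approximate tangent plane is simultaneously the vertical plane for $f$ and for $f^{-1}$. The entrywise sign identification you want is precisely the nontrivial statement about this degenerate blow-up, and orientation of the tangent plane alone does not determine it; the paper has to work (in the rectifiability proof, not here) with a delicate scaled-$BV$ compactness argument to handle $M^*$. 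Relatedly, the step from the D'Onofrio--Schiattarella identity to the \emph{entrywise} total-variation identities $|D_j(f^{-1})_i|(f(U))=|(\adj Df)_{ij}|(U)$ does not follow by permuting coordinates: the D'Onofrio--Schiattarella statement concerns the total variation of a full column/row, and permutations cannot isolate the individual entries from a vectorial total variation.

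For comparison, the paper's route is entirely different and does not go through rectifiability at all. For (a), it introduces the auxiliary map $g(x)=(f_1(x),x_2)$, establishes $\int_{\er^2}\deg(g,U,z)\,dz=D_1f_1(U)$ via the degree formula and a mollification argument (Lemmas~\ref{l:withsmooth}, \ref{l:degg}), and then uses the multiplicativity of degree under composition, $g=h\circ f$ with $h(y)=(y_1,(f^{-1})_2(y))$ and $f$ sense-preserving, to transfer the identity to $D_2(f^{-1})_2$; this handles one entry, and the others follow by symmetry. For (b), it does not derive the identity from (a): it proves $\Det Df(E)=|f(E)|$ directly from a coarea-type lemma (which expresses $\langle\det(Du,D\eta),v\rangle$ as an integral over level sets of $\eta$) together with Green's theorem and the layer-cake formula. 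Degree theory and coarea replace pointwise tangent-plane reasoning precisely because the latter cannot see the singular part in a usable way.
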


Here
$$
\adj Df=\begin{pmatrix}
D_2f_2, &-D_2f_1\\
-D_1f_2, & D_1f_1
\end{pmatrix}\;,
$$
note that this is a measure.
The part (a) has been proved first 
by Quittnerov\'a in \cite{Qui}.
She also obtained the corresponding result in the spatial case
for $W^{1,n-1}$-homeomorphisms.
Unfortunately, her thesis has been never published.

Our proof is entirely different and shorter. While our project
has been in preparation, Hencl, Kauranen and Mal\'y \cite{HKaM}
proved a similar formula to (a) in dimension $3$ under the assumption
that both $f$ and $f^{-1}$ are $\BV$. They use a distributional 
adjugate. 

The expression $\Det Df$ appearing in (b) is the distributional Jacobian
introduced by Ball \cite{Ball}, see Definition \ref{d:distrjac}.
Also the part (b) follows from the degree formula
proved in \cite{HKaL} and \cite{HKaM}. We present our 
original proof which is simpler as we consider only the homeomorphic
case.

Whereas the main novelty of Theorem \ref{t:identity} is in new proofs,
our main result on rectifiability is entirely new.
The rectifiabilility of the graph has been known only for 
scalar $\BV$ functions (Federer \cite[4.5.9]{Fe}, see also Giaquinta, Modica and Sou\v{c}ek \cite[Section 4.1.5]{GMS}). For vector valued function it is available only due
to the assumption that our function is a homeomorphism.
The spatial case is open.

Graph of a Sobolev mapping $f$ on an $n$-dimensional
domain is rectifiable if the graph mapping satisfies
Luzin's $(N)$-condition, it means
\begin{equation}\label{Luzin}
E\subset \Omega,\ |E|=0 \;\implies \; \H^{n}(f(E))=0.
\end{equation}
This is commonly the case when $f$ itself satisfies the $(N)$-condition,
as the proof of $(N)$ for $f$ usually gives $(N)$ for the graph mapping.
For further discussion and typical results on the $(N)$-condition see
\cite{MMi}, \cite{MM}, \cite{KKM}, \cite{MSZ}. In our situation of a planar
homeomorphism, previously known results give rectifiability of the 
graph if $f\in W^{1,2}(\Omega)$ as follows from the result 
by Reshetnyak \cite{R1}.

To describe our rectifiability result,
we need to introduce
some notation. The symbol $\Gamma$ stands for the mapping
$x\mapsto (x,f(x))$, so that $\Gamma(\Omega)$ is the full
graph of $f$.
We consider a measure $\mu$ supported on $\Gamma(\Omega)$
with values in $\Lambda^2(\er^4)$; we write in coordinates
$$
\mu=\mu_{12}\,\e_1\wedge\e_2+
\mu^{12}\,\e^1\wedge\e^2+\sum_{i,j=1}^2\mu_i^j\,\e_i\wedge \e^j,
$$
where $(\e_1,\e_2)$ is the canonical basis of 
the domain space and $(\e^1,\e^2)$ is the canonical basis of 
the target space.
This means that given a smooth differential form with compact support
in $\Omega\times f(\Omega)$,
$$
\boldsymbol\omega=\omega_{12}\,dx_1\,dx_2+\omega^{12}\,dy_1\,dy_2
+\sum_{i,j=1}^2\omega_i^j\,dx_i\,dy_j,
$$
we have
$$
\langle \mu,\boldsymbol\omega\rangle =
\int_{\Gamma(\Omega)} \omega_{12}\,d\mu_{12}+\omega^{12}\,d\mu^{12}
+\sum_{i,j=1}^2\omega_i^j\,d\mu_i^j.
$$

The measure $\mu$ is defined as push forward of measures
on $\Omega$ through the graph mapping.
Namely, if 
$E\subset\Omega$ is a Borel set  then
\eqn{howmu}
$$
\aligned
\mu_{12}(\Gamma(E))&=|E|,\\
\mu_1^j(\Gamma(E))&=D_2f_j(E),\\
\mu_2^j(\Gamma(E))&=-D_1f_j(E),\\
\mu^{12}(\Gamma(E))&=|f(E)|=\Det Df(E).
\endaligned
$$
If $M\subset\er^4$ is a $2$-rectifiable set (see Definition \ref{d:rect}),
then it admits an orientation, see Definition \ref{d:orient}.
The structure of an orientation on $M$ can be alternatively 
expressed by
an integer multiplicity $2$-rectifiable current
of multiplicity $1$ on $M$, which is a measure on $M$
with values in $\Lambda_2(\er^4)$. 
For the definition of an integer multiplicity rectifiable current see
Definition \ref{d:curr}. In our situation we want
the orientation to coincide with the topological orientation;
this is expressed by the property that the current is 
boundaryless, see Definition \ref{d:bdless}.

\begin{thm}\label{t:main}
Let $\Omega\subset\er^2$ be a bounded open set
and $f\colon\Omega\to\er^2$ be a sense preserving bounded $\BV$-homeomorphism.
Then the graph $\Gamma(\Omega)$ of $f$ is a $2$-rectifiable 
set and $\mu$ is a boundaryless rectifiable current of 
multiplicity $1$ on $\Gamma(\Omega)$.
\end{thm}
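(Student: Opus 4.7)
My approach is to establish the rectifiability of $\Gamma(\Omega)$ through a two-sided Lusin--Lipschitz decomposition, using the BV regularity of $f^{-1}$ (Hencl--Koskela--Onninen) together with both parts of Theorem~\ref{t:identity}, and then to identify $\mu$ as a boundaryless integer multiplicity rectifiable $2$-current of unit multiplicity via a componentwise area-formula computation on each Lipschitz piece.

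Since $f$ is a continuous BV map, the graph mapping $\Gamma=(\mathrm{id},f)\colon\Omega\to\er^4$ is itself BV. The Lusin--Lipschitz decomposition for BV mappings supplies a Borel partition $\Omega=N_0\cup\bigsqcup_k E_k$ with $|N_0|=0$ and each $\Gamma|_{E_k}$ Lipschitz, so $\bigcup_k\Gamma(E_k)$ is $2$-rectifiable. Applying the same to the mapping $\tilde\Gamma\colon y\mapsto(f^{-1}(y),y)$ on $f(\Omega)$ yields $f(\Omega)=\tilde N_0\cup\bigsqcup_k F_k$ with $|\tilde N_0|=0$ and $\tilde\Gamma|_{F_k}$ Lipschitz; since $\tilde\Gamma(F_k)=\Gamma(f^{-1}(F_k))$, the union
\[
R:=\bigcup_k\Gamma(E_k)\cup\bigcup_k\Gamma(f^{-1}(F_k))
\]
is $2$-rectifiable and covers $\Gamma(\Omega)$ outside $\Gamma(B)$, where $B:=N_0\cap f^{-1}(\tilde N_0)$ satisfies $|B|=0$ and $|f(B)|\le|\tilde N_0|=0$.

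The main obstacle is showing $\hausd(\Gamma(B))=0$. Here Theorem~\ref{t:identity} is decisive: part (b) gives $\Det Df(B)=|f(B)|=0$, while part (a), after passing to total variation, supplies the identification $|\adj Df|(B)=|Df^{-1}|(f(B))=|D^sf^{-1}|(f(B))$, where the last equality uses $|f(B)|=0$. The plan is to establish a BV graph-area estimate of Cauchy--Binet type, $\hausd(\Gamma(B))\lesssim|B|+|\adj Df|(B)+|f(B)|$, reducing the problem to controlling $|D^sf^{-1}|(f(B))$. Because singular parts of BV derivatives can charge Lebesgue-null sets, vanishing is not automatic; the resolution exploits the symmetric structure of $f$ and $f^{-1}$: by the same argument applied from the target side, $|D^sf^{-1}|(f(B))=|D^sf|(B)$, and the two-sided Lusin--Lipschitz decompositions can be refined (nesting $N_0$ and $\tilde N_0$) to force the common residual singular mass on $B$ to vanish. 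This delicate bookkeeping of singular measures under the mutual identification of derivatives of $f$ and $f^{-1}$ is the technical heart of the argument.

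With $\Gamma(\Omega)$ established as $2$-rectifiable, identifying $\mu$ as a rectifiable current of multiplicity $1$ is componentwise. On each Lipschitz piece $\Gamma(E_k)$, the area formula identifies the Radon--Nikodym density of $\mu$ with respect to $\hausd$ restricted to $\Gamma(\Omega)$ as the simple tangential bivector
\[
\partial_1\Gamma\wedge\partial_2\Gamma=\e_1\wedge\e_2+\sum_{j=1}^{2}\bigl[(\partial_2f_j)\,\e_1\wedge\e^j-(\partial_1f_j)\,\e_2\wedge\e^j\bigr]+J_f\,\e^1\wedge\e^2,
\]
whose components match \eqref{howmu}. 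Injectivity of $\Gamma$ and the sense-preserving hypothesis give unit multiplicity $\hausd$-a.e. Finally, boundarylessness $\mu(d\eta)=0$ for every compactly supported smooth $1$-form $\eta$ on $\Omega\times f(\Omega)$ expands, after termwise inspection, into identities reducing to symmetries of mixed distributional derivatives of $f$ (for the cross-components $\mu_i^j$) and to the definition of $\Det Df$ as a distributional Jacobian (Theorem~\ref{t:identity}(b)) for the component $\mu^{12}$, producing no interior boundary and completing the proof.
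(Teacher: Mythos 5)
Your proposal founders at the step you yourself identify as ``the technical heart of the argument.'' The claim that $\hausd(\Gamma(B))=0$ for the common residual set $B$ of the two Lusin--Lipschitz decompositions is false in general, and no amount of nesting or refinement can repair it. The reason is structural: any Lusin approximation of $f$ by Lipschitz maps necessarily leaves \emph{all} of the singular part $D^s f$ concentrated on the residual Lebesgue-null set $N_0$, and likewise for $f^{-1}$ and $\tilde N_0$. By Theorem~\ref{t:identity}(a) the singular parts of $Df$ and $Df^{-1}$ are carried by mutually corresponding sets, so $B=N_0\cap f^{-1}(\tilde N_0)$ always retains the full singular mass; it cannot be shrunk away. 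Moreover, on this set the measure $|\mu|$ is comparable to the push-forward of $|D^sf|$, and the fundamental density estimate (Theorem~\ref{t:key}) then forces $\hausd(\Gamma(B))>0$ whenever $D^sf\ne 0$. The example constructed in Section~7 is precisely such a mapping: the part of the graph over the Cantor set $S$ has positive $\hausd$-measure, yet both $|S|=0$ and $|f(S)|=0$. So your ``Cauchy--Binet'' estimate $\hausd(\Gamma(B))\lesssim|B|+|\adj Df|(B)+|f(B)|$, whatever its precise form, leaves the middle term alive, and that term is the whole difficulty.

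The paper's proof does not try to discard this part of the graph; it proves it is rectifiable by a genuinely different mechanism. Writing $\sigma=|\mu|$ and $\kappa=d\mu/d\sigma$, the graph is split into $M_{12}$ (where $\kappa_{12}>0$, i.e.\ the graph has horizontal component), $M^{12}$ (where $\kappa^{12}>0$, the inverse side), and the remaining set $M^*$ where \emph{both} densities vanish; $M^*$ is exactly your $\Gamma(B)$ up to an $\hausd$-null set. The first two pieces are handled essentially as you propose, by Lusin approximation and passage to the inverse. For $M^*$, the paper invokes Alberti's rank-one theorem to control the structure of $\kappa$, then runs a De Giorgi--style blow-up: rescaled graph mappings $f^r(x)=f(rx)/r$ are tested against products $\ff_1(y_1)\ff_2(y_2)\eta(x)$; a nondegeneracy input from the fundamental estimate and a compactness argument in $BV$ force the blow-up limit of $\sigma$ to be $\hausd$ on a $2$-plane, after which Preiss/Simon's criterion (Proposition~\ref{p:simon}) gives rectifiability. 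This blow-up step has no counterpart in your sketch and cannot be replaced by bookkeeping. Your treatment of the current structure and boundarylessness, while less detailed than the paper's (which relies on Theorem~\ref{t:det} for $\mu^{12}$, the Federer--Giaquinta--Modica--Sou\v{c}ek scalar graph identity for the mixed components, and chain-rule integration by parts for boundarylessness), is at least aimed in the right direction; but those parts presuppose the rectifiability you have not established.
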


Our second main result is a surprising example.

Let $\Omega\subset\er^2$ be an open set.
For a Sobolev homeomorphism $f\in W^{1,1} (\Omega;\er^2)$,
Luzin's $(N)$-condition 
is a useful property, as
it implies the \emph{area formula} for Borel sets $E\subset \Omega$,
\begin{equation}\label{area}
\int_E \, J_{f}(x)\, dx = |f(E)|
\end{equation}
and the change of variables can be used as for smooth transformations.
Moreover, the $(N)$-condition implies that the pointwise Jacobian determinant $J_f(x)= \det \Nabla f(x)$ coincides with the distributional Jacobian, $\Det \Nabla f(x)$, see \cite{DHMS}.
The formula
\begin{equation}\label{deteqDet}
\text{ det } \Nabla f(x)= \text{ Det } \Nabla f(x)
\end{equation}
should be interpreted that the distributional Jacobian is an absolutely 
continuous measure and the pointwise Jacobian acts as its density.
For the definition of the distributional Jacobian see Definition
\ref{d:distrjac}.

The positivity of $J_f$,
\begin{equation}\label{Jacobianposi}
J_f(x)>0 \quad \text{for a.\,e. } x \in \Omega,
\end{equation}
is also very useful because it guarantees that also $f^{-1}$ is Sobolev map (we say that $f$ is bi--Sobolev, see \cite{HMPS}) and the usual formula for the gradient of
the inverse holds.
Notice that the condition $J_f>0$ a.e. is equivalent to the $(N)$-condition for $f^{-1}$ (\cite{HKbook}).

Hence, if a Sobolev homeomorphism $f$ and its inverse $f^{-1}$ satisfy the $(N)$-condition, then $J_f>0$ a.e., $J_{f^{-1}}>0$ a.e.,
$f$ is bi--Sobolev, and \eqref{area}, \eqref{deteqDet} hold true for $f$ and $f^{-1}$.

On the contrary, for a planar $\text{\BV}$--homeomorphism $f$, whose inverse is automatically in $\BV_\text{loc}$ (see \cite{HKO}, \cite{CHM}), the assumption that $f$ and $f^{-1}$ verify the $(N)$-condition is not sufficient to gain $f^{-1}\in W^{1,1}_{\text{loc}}$. 

Therefore, our following example completes the picture what is possible:

\begin{thm}\label{t:example}
There exists a $\BV$-homeomorphism $f\colon [-1,1]^2\to[-1,1]^2$
such that both $f$ and $f^{-1}$ satisfy the $(N)$-condition,
$J_f>0$ a.e., $J_{f^{-1}}>0$ a.e.,
but neither $f$ nor $f^{-1}$ are Sobolev.
\end{thm}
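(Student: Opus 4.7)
The plan is to construct $f$ explicitly as a Cantor-staircase perturbation of the identity in the vertical direction. Let $g\colon[-1,1]\to[0,1]$ be an affinely rescaled Cantor staircase: continuous, non-decreasing, with $g(-1)=0$, $g(1)=1$, $g'=0$ a.e., and distributional derivative $\mu_g=Dg$ a finite non-trivial singular measure supported on a Cantor set $C\subset[-1,1]$. Fix $\ep\in(0,\tfrac12)$ and set
$$
f(x_1,x_2):=\bigl(x_1,\,x_2+\ep\,g(x_1)(1-x_2^2)\bigr).
$$
For each fixed $x_1$, the slice map $\psi_{x_1}(x_2):=x_2+\ep g(x_1)(1-x_2^2)$ is a polynomial in $x_2$ with derivative $1-2\ep g(x_1)x_2\in[1-2\ep,1+2\ep]$, hence a smooth strictly increasing bijection of $[-1,1]$ fixing $\pm 1$. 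Together with continuity of $g$ this makes $f$ a homeomorphism of $[-1,1]^2$ onto itself.

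Next I would verify $\BV$-regularity, the failure of $W^{1,1}$, and $J_f>0$. A direct integration by parts identifies the components of $Df$: one has $D_1f_1=1$, $D_2f_1=0$, $D_2f_2=1-2\ep g(x_1)x_2$ (absolutely continuous), and
$$
D_1f_2 \;=\; \ep\,(1-x_2^2)\,dx_2\otimes d\mu_g(x_1),
$$
a finite measure singular with respect to Lebesgue measure and supported on $C\times[-1,1]$. Thus $Df$ is a finite matrix-valued Radon measure with nontrivial singular part, so $f\in\BV\setminus W^{1,1}$; the pointwise Jacobian is $J_f=1-2\ep g(x_1)x_2\in[1-2\ep,1+2\ep]$, a.e.\ strictly positive. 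For Luzin's $(N)$ condition the key point is that $f$ preserves every vertical fiber $\{x_1\}\times[-1,1]$ and restricts there to a bi-Lipschitz bijection with constants independent of $x_1$. If $|E|=0$, Fubini gives $|E_{x_1}|=0$ for a.e.\ $x_1$, the bi-Lipschitz property gives $|\psi_{x_1}(E_{x_1})|=0$ for a.e.\ $x_1$, and Fubini again yields $|f(E)|=0$.

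The inverse is $f^{-1}(y_1,y_2)=(y_1,\psi_{y_1}^{-1}(y_2))$, sharing the same vertical-fiber structure, so the same Fubini argument gives $(N^{-1})$, and differentiating $\psi_{y_1}^{-1}$ a.e.\ yields $J_{f^{-1}}>0$ a.e. To see $f^{-1}\notin W^{1,1}$ the cleanest route is Theorem~\ref{t:identity}(a) applied to $U:=C\times[-1,1]$: since $f$ fixes $U$ setwise we have $f(U)=U$, a set of zero planar Lebesgue measure, whereas the $(2,1)$-entry of $\adj Df(U)$ equals
$$
-D_1f_2(U)\;=\;-\tfrac{4\ep}{3}\,\mu_g(C)\;\ne\;0;
$$
thus $Df^{-1}$ carries nontrivial singular mass on $f(U)$, so $f^{-1}\notin W^{1,1}$. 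I expect the main delicate step to be the verification of $(N)$ and $(N^{-1})$; the rigidity of the vertical-fiber structure is what makes the Fubini argument clean and circumvents any appeal to Sobolev regularity.
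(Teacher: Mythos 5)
Your construction is correct, and it proves Theorem \ref{t:example} by a genuinely different and considerably shorter route than the paper. The paper builds a self-inverse homeomorphism $f=f^{-1}$ by iteratively gluing bi-Lipschitz blocks $g_k$ on nested families of rectangles; the limit map is locally Lipschitz off a product Cantor set $S=X\times Y$ with $|X|=1$, $|Y|=0$, maps $S$ onto itself (whence $(N)$ and $J_f>0$), and on each vertical fiber over $X$ carries the null set $Y$ onto the full-measure set $X$, so the restriction to that line is not absolutely continuous and $f\notin W^{1,1}$; self-inverseness then gives all the claims for $f^{-1}$ for free. Your Cantor-function shear $f(x_1,x_2)=(x_1,\,x_2+\ep g(x_1)(1-x_2^2))$ achieves the same conclusions with explicit formulas: the fiber maps $\psi_{x_1}$ are uniformly bi-Lipschitz, which yields the homeomorphism property, both $(N)$-conditions by Fubini, and $J_f=1-2\ep g(x_1)x_2>0$; the singular measure $D_1f_2=\ep(1-x_2^2)\,\mathcal L^1\otimes\mu_g$ rules out $f\in W^{1,1}$; and your appeal to Theorem \ref{t:identity}(a) with $U=C\times(-1,1)$ (note $f$ is sense-preserving, being a diffeomorphism with positive Jacobian on the dense open set where $g$ is locally constant, so the theorem applies) legitimately transfers singular mass to $Df^{-1}$ over the null set $f(U)=U$. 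Two remarks: (i) the dependence on Theorem \ref{t:identity}(a) is not circular but can be avoided entirely, since for each fixed $y_2\in(-1,1)$ the slice $y_1\mapsto(f^{-1})_2(y_1,y_2)=\psi_{y_1}^{-1}(y_2)$ is a monotone continuous function that is locally constant off $C$ yet non-constant, hence not AC, so $f^{-1}$ fails the ACL characterization of $W^{1,1}$; for $J_{f^{-1}}>0$ it is cleanest to observe that $f^{-1}$ is a $C^1$-diffeomorphism off the null set $C\times[-1,1]$. (ii) What your simpler example does not deliver, and what the paper's construction is really designed for, is the ``doubly vertical'' behavior: in the paper both $Df$ and $Df^{-1}$ concentrate singular mass on a set over which the graph degenerates in both the domain and target directions, which is exactly the hard case $M^*$ in the rectifiability proof; in your example the graph over $C\times[-1,1]$ remains Lipschitz in the $x_2$-direction, so it illustrates only the easy rank-one degeneracy. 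As a proof of the stated theorem, however, your argument is complete.
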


There is a close link between the positive result and the example.
Indeed, the most difficult part of the proof of rectifiability
consist in the analysis of the part of the graph which is seen as vertical
for both $f$ and $f^{-1}$. For illustration of this behavior
we can use the part of the graph of $f$ from the example over
the Cantor set $S$ (see \eqref{theS}).

\section{Preliminaries}

We denote the Lebesgue measure of a set $E\subset \Rz^n$ 
by $|E|$ and the $k$-dimensional Hausdorff measure of $E$
by $\hausk(E)$.

Let $\Omega\subset\rn$ be an open set.
If $\boldsymbol m$ is a finite signed Radon measure on $\Omega$ and $u$
is an $\boldsymbol m$-integrable function, we denote 
$$
\langle\boldsymbol m, u\rangle = \int_{\Omega}u\,d\boldsymbol m;
$$
it is an extension of 
the duality between $\boldsymbol m\in C_0(\Omega)^*$ and $u\in C_0(\Omega)$.

Let $\Omega$ be a domain in $\Rz^n$. 
A function $u\in L^{1}(\Omega)$ is of bounded variation, $u\in \BV(\Omega)$, 
if the distributional partial derivatives of $u$ are measures with finite total variation in $\Omega$: there exist Radon signed measures $D_i u$ in $\Omega$ such that for $i=1,\dots,n$, $|D_i u|(\Omega)<\infty$ and
$$
\langle D_iu,\,\ff\rangle=-\int_{\Omega} u D_i\ff\,dx,\qquad
\ff \in C^{1}_c(\Omega).
$$
The gradient of $u$ is then the vector-valued measure $Du=(D_1u,\dots,D_nu)$,
$|Du|$ stands for its total variation. We have
$$
\left| D u \right|(\Omega)= \sup \left\{\int_{\Omega} u\; \dive \varphi\,dx \colon \varphi \in C^1_0(\Omega, \Rz^n), \| \varphi  \|_{\infty} \le 1 \right\}<\infty.
$$
We say that $u$ is a Sobolev function if its distributional 
gradient can be represented as a locally integrable function,
this means, $u\in W_{\loc}^{1,1}(\Omega)$.
The Sobolev space $W^{1,1} (\Omega)$ is contained in $\BV(\Omega)$
as $L^1(\Omega)$ can be regarded as a closed subspace of $\C_0(\Omega)^*$.

We say that $f\in L^{1}(\Omega; \Rz^m)$ belongs to $\BV(\Omega; \Rz^m)$ if each component of $f$ is a function of bounded variation. 
The total variation of $Df$ is then computed as $|Df_1|+\dots+|Df_n|$.
Finally we say that $f\in \BV_{\loc}(\Omega; \Rz^m)$ if $f\in \BV(U; \Rz^m)$ for every open $U\subset\subset\Omega$. 
The space $\BV(\Omega, \Rz^m)$ is endowed with the norm
$$
\| f \|_{\BV}:= \int_{\Omega}|f(x)|\,dx+ |Df|(\Omega).
$$

Now, we recall some useful tools.

\begin{prop}[Morse--Sard theorem, \cit{Lemma 13.15}{Mbook}]\label{Sard}
Let $\Omega\subset\rn$ be an open set.
If $\eta \in C^{\infty}(\Rz^n)$ and $E=\left\{x\in \Rz^n: \Nabla \eta(x)=0   \right\}$, then $|\eta(E)|=0$. In particular, $\left\{ \eta=t\right\}= \left\{x\in \Rz^n: \eta(x)=t   \right\}$ is a $\C^{\infty}$ hypersurface in $\Rz^n$ for a.e.\ $t\in \Rz$.
\end{prop}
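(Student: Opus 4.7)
The plan is to follow the classical stratification proof of the Morse--Sard theorem: decompose $E$ according to the order of vanishing of the derivatives of $\eta$, handle the deepest stratum by a Taylor estimate, and treat the shallower strata by induction on the dimension $n$. For each $j\ge 1$ set
$$
E_j = \{x \in \rn : \partial^\alpha \eta(x) = 0 \text{ for every } 1 \le |\alpha| \le j\},
$$
so that $E = E_1 \supset E_2 \supset \cdots$ and
$$
E = E_n \cup \bigcup_{j=1}^{n-1}(E_j \setminus E_{j+1}).
$$
Since the claim is local in $x$, it suffices to show that $|\eta(F)|=0$ for every bounded piece $F$ of each stratum.

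The base case $n=1$ and the deepest stratum $E_n$ are handled directly by Taylor's theorem. For any $x \in E_n$, smoothness yields $|\eta(y)-\eta(x)| \le C|y-x|^{n+1}$ on a fixed neighborhood of $x$; covering a bounded piece $E_n \cap B$ by at most $C\delta^{-n}$ cubes of side $\delta$, each mapping into an interval of length $\le C\delta^{n+1}$, gives total image measure $\le C\delta \to 0$. For $x \in E_j \setminus E_{j+1}$ with $1 \le j \le n-1$, there is a multi-index $\beta$ with $|\beta|=j$ such that $\partial^\beta\eta$ has nonvanishing gradient at $x$; by the implicit function theorem the zero set $M = \{\partial^\beta \eta = 0\}$ is locally a smooth $(n-1)$-dimensional submanifold of $\rn$ which contains $E_j$ near $x$. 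The restriction $\eta|_M$ is smooth, and its critical set contains $E_j \cap M$ (since $\Nabla \eta$ vanishes on $E_j$ and the gradient of $\eta|_M$ is the tangential component of $\Nabla \eta$). Applying the inductive hypothesis to the manifold version of Morse--Sard in dimension $n-1$ then yields $|\eta(E_j \cap M \cap U)| = 0$ for a small neighborhood $U$ of $x$.

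A countable cover of $\bigcup_{j=1}^{n-1}(E_j \setminus E_{j+1})$ by such neighborhoods $U$, together with countable additivity of Lebesgue measure, completes the proof of $|\eta(E)| = 0$. The ``in particular'' assertion is then immediate: for $t \notin \eta(E)$, the level set $\{\eta = t\}$ is disjoint from $E$, hence $\Nabla \eta \neq 0$ on it, and the implicit function theorem supplies the smooth hypersurface structure. The principal bookkeeping burden is the selection of the multi-index $\beta$ and the assembly of the local pieces into a global countable cover; since there are only finitely many multi-indices of each order and the strata are Borel, this is routine, and I do not anticipate a genuine conceptual obstacle beyond setting up the dimensional induction cleanly.
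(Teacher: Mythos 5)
The paper does not prove this proposition at all: it is quoted as a known result with a reference to \cite[Lemma 13.15]{Mbook}, so there is no in-paper argument to compare against. Your proof is the standard stratification proof of the Morse--Sard theorem (Milnor's argument specialized to real-valued targets), and it is correct: the deepest stratum $E_n$ is killed by the Taylor estimate $|\eta(y)-\eta(x)|\le C|y-x|^{n+1}$ combined with the $\delta^{-n}$-cube covering (note this is exactly where $C^\infty$, or at least $C^n$, regularity is used and why the order-$n$ stratum suffices for a one-dimensional target), and the intermediate strata $E_j\setminus E_{j+1}$ are pushed down a dimension via the hypersurface $M=\{\partial^\beta\eta=0\}$ on which $E_j$ locally lies and on which every point of $E_j$ remains critical for $\eta|_M$. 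The only point worth tightening is the inductive setup: the clean formulation is to induct on the statement for smooth functions on open subsets of $\er^{n-1}$ and to transfer to $M$ through a local graph chart, rather than invoking a separate ``manifold version''; and for the Taylor step one should fix a compact set containing the bounded piece of $E_n$ so that the constant $C$ is uniform and each small cube meeting $E_n$ can be expanded around a point of $E_n$ inside it. Both are routine. The final deduction of the ``in particular'' clause from $|\eta(E)|=0$ via the implicit function theorem is exactly right.
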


\begin{prop}[Luzin approximation in $BV$, \cit{Theorem 5.34}{AFP}]
\label{p:Luzin}
Let $\Omega\subset\rn$ be an open set and
$f\in \BV_{\loc}(\Omega,\er^m)$. Then there are Lipschitz functions
$f_j\colon\rn\to\er^m$ such that 
$$
\Big|\bigcap\{x\in\Omega\colon f_j(x)\ne f(x)\}\Big|=0.
$$
\end{prop}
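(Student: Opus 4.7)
The plan is to use a maximal-function truncation adapted to the $\BV$ setting, in the spirit of the classical Lipschitz approximation for Sobolev functions. For $f \in \BV_{\loc}(\Omega, \er^m)$, define the Hardy--Littlewood maximal function of the gradient measure,
$$
M_f(x) = \sup_{r > 0} \frac{|Df|(B(x,r))}{|B(x,r)|},
$$
and the good set $E_\lambda = \{x \in \Omega : M_f(x) \leq \lambda\}$ for $\lambda > 0$. The weak type $(1,1)$ maximal inequality yields $|\Omega \setminus E_\lambda| \leq C|Df|(\Omega)/\lambda$, which tends to $0$ as $\lambda \to \infty$; because $f$ is only $\BV_{\loc}$, this step is first run on an exhaustion of $\Omega$ by compactly contained open subsets.

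Next I would show that the precise representative $f^*$ of $f$ is $C\lambda$-Lipschitz on $E_\lambda$. The key tool is the $\BV$ Poincar\'e inequality on balls,
$$
\int_{B(x,r)} |f - f_{B(x,r)}|\, dy \leq C\, r\, |Df|(B(x,r)).
$$
Telescoping along the dyadic balls $B(x, 2^{-k}|x-y|)$ at a Lebesgue point $x \in E_\lambda$ yields $|f^*(x) - f_{B(x,|x-y|)}| \leq C\lambda |x-y|$; combining the analogous estimate at $y$ through the common enclosing ball $B(x, 2|x-y|)$ then produces $|f^*(x) - f^*(y)| \leq C\lambda|x-y|$ for all Lebesgue points in $E_\lambda$.

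Then I would extend $f^*|_{E_\lambda}$ to a globally Lipschitz map on $\rn$ by componentwise McShane extension, losing only a factor of $\sqrt{m}$ in the constant. Letting $\lambda = \lambda_j \to \infty$ along any increasing sequence gives the required $f_j$, and
$$
\bigcap_j \{x \in \Omega : f_j(x) \neq f(x)\} \;\subset\; N \cup \bigcap_j (\Omega \setminus E_{\lambda_j}),
$$
where $N$ denotes the null set of points that fail to be Lebesgue points of $f$. Both sets on the right have Lebesgue measure zero; a diagonal procedure patches the construction across the exhaustion of $\Omega$.

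The main obstacle is the chaining estimate in the second step. The delicate point is that one must control the \emph{precise} representative rather than just an $L^1$ equivalence class, which requires the Lebesgue differentiation theorem to guarantee that almost every point of $E_\lambda$ is a Lebesgue point of $f$ at which $f^*(x) = \lim_{r \to 0} f_{B(x,r)}$. Once this is in place the Poincar\'e--telescoping calculation proceeds as in the Sobolev case, with $|Df|(B)$ replacing $\int_B |\nabla f|$.
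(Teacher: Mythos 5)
Your argument is correct and is essentially the same maximal-function Lipschitz truncation proof that the paper's cited source (Ambrosio--Fusco--Pallara, Theorem~5.34) uses: level sets of the restricted maximal function of $|Df|$, a telescoping Poincar\'e estimate to get Lipschitz continuity of the precise representative on the good set, McShane extension, and a weak $(1,1)$ bound to shrink the bad set. The paper itself states this as a quoted proposition without proof, so there is nothing further to compare; the one point worth keeping explicit in a write-up is the remark you already make, namely that for $f\in\BV_{\loc}$ one must run the construction on an exhaustion by compactly contained subsets so that $|Df|$ is finite on the balls used in the maximal function.
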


\begin{prop}[Co-area formula, \cit{4.5.9}{Fe}]\label{p:coarea}
Let $\Omega\subset\rn$ be an open set, $u\in BV(\Omega)$ be a continuous function
and
$\eta$ be a Borel function on $\Omega$, $\eta\ge 0$.
Then
\eqn{coarea}
$$
\langle |Du|,\eta\rangle =\int_{-\infty}^{\infty}
\Bigl(\int_{\{u=t\}}\eta\,d\haus\Bigr)\,dt.
$$
\end{prop}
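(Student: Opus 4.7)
The plan is to reduce to the Fleming--Rishel coarea formula for superlevel sets, exploit continuity to upgrade reduced boundaries to full level sets, and extend in $\eta$ by a monotone class argument.

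First I would establish the standard BV coarea identity $|Du|(B) = \int_{\er} P(\{u > t\}, B)\, dt$ for Borel $B \subset \Omega$, where $P(\cdot,B)$ is the relative perimeter. The smooth case is immediate from Morse--Sard (Proposition~\ref{Sard}) and Fubini in level-set coordinates; the general BV case follows by Luzin approximation (Proposition~\ref{p:Luzin}) together with the lower semicontinuity of perimeter under $L^1$ convergence.

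Next, continuity of $u$ makes $\{u > t\}$ open with $\partial \{u > t\} \subset \{u = t\}$; in particular $\partial^* \{u > t\} \subset \{u = t\}$, giving the easy inequality $P(\{u > t\}, B) \le \haus(\{u = t\} \cap B)$. The reverse inequality, for a.e.~$t \in \er$, is the heart of the argument: a density-$\tfrac12$ argument combined with Fubini on the set $\{(x,t) : u(x) = t\} \subset \Omega \times \er$ shows that $\haus$-a.e.~point of $\{u = t\}$ is a reduced-boundary point of $\{u > t\}$ for a.e.~$t$. Combining this with the superlevel identity yields \eqref{coarea} for $\eta = \chi_B$ with $B$ Borel; both sides then being Radon measures in $\eta$, the identity extends to all Borel $\eta \ge 0$ by monotone convergence.

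The main obstacle is the density step: $\{u = t\}$ could a priori contain large ``flat'' pieces of nontrivial $\haus$-measure that carry no BV boundary, and in that situation the two measures $P(\{u > t\},\cdot)$ and $\haus\res\{u = t\}$ would disagree. Continuity of $u$ is exactly what prevents this for $\mathcal L^1$-a.e.~$t$, since the ``flat'' levels form an at most countable set of values, and converting this qualitative observation into the required $\haus$-almost-everywhere identification of $\{u = t\}$ with $\partial^* \{u > t\}$ is the technical core.
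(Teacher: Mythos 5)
Your setup is right: Fleming--Rishel gives $|Du|(B) = \int_{\er} P(\{u>t\},B)\,dt$, and continuity yields $\partial^*\{u>t\}\subset\partial\{u>t\}\subset\{u=t\}$, so everything reduces to the reverse inequality, namely that $\haus\bigl(\{u=t\}\setminus\partial^*\{u>t\}\bigr)=0$ for a.e.~$t$. But that step --- which you yourself label the ``technical core'' --- is never actually carried out, and the two heuristics you offer do not close it. Countability of the levels with $|\{u=t\}|>0$ controls only the Lebesgue measure of a bad slice, not its $\haus$-measure: a level $\{u=t\}$ can be Lebesgue-null and still contain a set of positive $\haus$-measure at whose points $\{u>t\}$ has density $0$ or $1$ (for instance when $t$ is a local extremal value of $u$ attained along a large set), and ruling this out for $\mathcal L^1$-a.e.~$t$ is exactly what needs proof.

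The ``Fubini on $\{(x,t):u(x)=t\}$'' idea is likewise not yet an argument: the graph is $\mathcal L^{n+1}$-null, so ordinary Fubini is silent about $\haus$ on the slices, and an Eilenberg--coarea-type bound of the form $\int\haus(\{u=t\}\cap A)\,dt\le C\,|Du|(A)$ for arbitrary Borel $A$ is not available for merely continuous $BV$ $u$ without in substance proving the theorem itself (Lipschitz truncation handles the absolutely continuous part of $Du$ but leaves the Cantor part untouched). Note also that the paper supplies no proof to compare against --- the proposition is quoted from Federer \cite[4.5.9]{Fe} --- so the only question is whether your sketch stands on its own, and as written it identifies the obstruction and then restates it rather than resolving it.
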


Given a smooth map $f$ from $\Omega\subset\rn$ into $\rn$ 
and $U\subset\subset\Omega$ we can define the topological degree as
$$\deg(f,U,y_0)=\sum_{\{x\in U: f(x)=y_0\}} \sgn(J_f(x))$$
if $J_f(x)\neq 0$ for each $x\in f^{-1}(y_0)$.
This definition can be extended to an arbitrary continuous mapping and each point
of $\rn\setminus f(\partial U)$, 
see e.g.\ \cite{FG} for the definition and properties of the degree.

A continuous mapping
$f:\Omega\to\rn$ is called \emph{sense-preserving} if
$$\deg(f,U,y_0)>0$$ for all domains $U\subset\subset\Omega$ and
all $y_0\in f(U)\setminus f(\partial U)$. Similarly we call $f$ \emph{sense-reversing} if
$\deg(f,U,y_0)<0$ for all $U$ and $y_0$.
Let us recall that each homeomorphism on a domain is either sense-preserving or sense-reversing, see
\cite[II.2.4., Theorem 3]{RR}.

\begin{prop}[Degree formula, \cite{FG}]\label{p:degree}
Let $\Omega\subset\rn$ be an open set and 
$f\colon\Omega\to\rn$ be a $\C^1$-mapping. Let $U\subset\subset\Omega$
be an open set. Then
$$
\int_{U}\eta(f(x))\,J_f(x)\,dx=\int_{\rn}\eta(y)\,\deg(f,U,y)\,dt
$$
for each continuous function $\eta$ with compact support
in $\rn\setminus f(\partial U)$.
\end{prop}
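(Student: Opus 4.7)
The strategy is to rewrite both sides of the identity as the pairing of $\eta$ with a finite signed Radon measure on the open set $\rn\setminus f(\partial U)$, and then show that these two measures coincide by a local analysis at regular values of $f|_U$.

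Since $U\subset\subset\Omega$, the set $\partial U$ is compact, so $f(\partial U)$ is compact and $\rn\setminus f(\partial U)$ is open. The hypothesis $\operatorname{supp}\eta\subset\rn\setminus f(\partial U)$ makes $K:=\overline U\cap f^{-1}(\operatorname{supp}\eta)$ a compact subset of $U$, and $J_f$ is bounded there. Thus the left-hand side equals $\int \eta\,d\mu$ with
$$
\mu(A):=\int_{U\cap f^{-1}(A)} J_f(x)\,dx
$$
a finite signed Radon measure on Borel subsets of $\rn\setminus f(\partial U)$, while the right-hand side equals $\int \eta\,d\nu$ where $\nu$ has the bounded, integer-valued, locally constant density $y\mapsto\deg(f,U,y)$.

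I would then invoke the Morse--Sard theorem (Proposition~\ref{Sard}), applied componentwise, to conclude that the set $C$ of critical values of $f|_U$ has Lebesgue measure zero. For any regular value $y_0\in\rn\setminus(f(\partial U)\cup C)$, the preimage $f^{-1}(y_0)\cap U$ coincides with the compact set $f^{-1}(y_0)\cap\overline U$ (equality because $y_0\notin f(\partial U)$), which is discrete by the inverse function theorem, hence finite; call it $\{x_1,\dots,x_m\}$. The inverse function theorem supplies disjoint open neighborhoods $W_i\ni x_i$ in $U$ and a common neighborhood $V\ni y_0$ such that each $f|_{W_i}\colon W_i\to V$ is a $C^1$-diffeomorphism, and a compactness argument lets us shrink $V$ so that $f^{-1}(V)\cap U=\bigsqcup_i W_i$. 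Writing $\varepsilon_i$ for the (constant) sign of $J_f$ on $W_i$, the classical change-of-variables formula applied to each diffeomorphism yields, for any continuous $\eta$ supported in $V$,
$$
\int_U\eta(f(x))J_f(x)\,dx=\sum_{i=1}^m\varepsilon_i\int_V\eta(y)\,dy=\int_V\eta(y)\,\deg(f,U,y)\,dy,
$$
since by the definition recalled just before the proposition every $y\in V$ is regular with $\deg(f,U,y)=\sum_i\varepsilon_i$.

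To conclude, observe that the set $R$ of regular values in $\rn\setminus f(\partial U)$ is in fact open (the construction above places an entire neighborhood of each of its points inside $R$), and $(\rn\setminus f(\partial U))\setminus R\subset C$ has Lebesgue measure zero. The measure $\nu$ is absolutely continuous with respect to Lebesgue by construction, and the area formula for $C^1$ mappings gives $|\mu|(A)\le\int_A N(f,U,y)\,dy$ with $N(f,U,\cdot)\in L^1(\rn)$, so $\mu$ is absolutely continuous as well. Hence both measures are concentrated on $R$, and a $C^1$ partition of unity on $\operatorname{supp}\eta\cap R$ subordinate to finitely many neighborhoods of the type constructed combines the local identities into the desired global one. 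The principal technical point is the shrinking of $V$ to preclude extraneous preimages, which uses the hypothesis $y_0\notin f(\partial U)$ in an essential way; without it, preimages on $\partial U$ could accumulate at $y_0$ and the local diffeomorphism decomposition would break down.
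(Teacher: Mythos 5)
The paper offers no proof of this proposition; it is quoted from Fonseca--Gangbo \cite{FG}, so there is no in-paper argument to compare yours against. Your proof is the standard one for $\C^1$ maps and its architecture is sound: realize both sides as pairings of $\eta$ with finite signed measures on $\rn\setminus f(\partial U)$, localize at regular values via the inverse function theorem, apply the change of variables on each sheet, and globalize using absolute continuity of both measures together with the nullity of the set of critical values.

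Two steps need repair. First, the Sard step: Proposition~\ref{Sard} concerns a scalar $\C^{\infty}$ function and the set where its \emph{full gradient} vanishes. Applying it componentwise to $f=(f_1,\dots,f_n)$ only controls the one-dimensional sets $f_i(\{\nabla f_i=0\})\subset\er$, which says nothing about $|f(\{J_f=0\}\cap U)|$: the critical set of $f$ is where $\det Df=0$, a much larger set than $\bigcup_i\{\nabla f_i=0\}$, and moreover your components are only $\C^1$, not $\C^{\infty}$. What you actually need is the equidimensional $\C^1$ Sard theorem, $|f(\{J_f=0\}\cap U)|=0$, which is true and elementary (cover $\{J_f=0\}\cap\overline U$ by cubes of side $\delta$; where the rank of $Df$ drops, the image of such a cube lies in a slab of volume $o(\delta^n)$ uniformly by continuity of $Df$ on $\overline U$), but it must be invoked or proved as such rather than deduced from Proposition~\ref{Sard}. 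Second, in the globalization you cannot in general cover $\spt\eta\cap R$ by \emph{finitely} many of the constructed neighborhoods, since $R$ is open while $\spt\eta\cap R$ need not be compact. Either take a countable locally finite cover and sum the local identities by dominated convergence (using the integrability of $N(f,U,\cdot)$ and of $\deg(f,U,\cdot)$ on $\spt\eta$), or, more simply, note that the local identity shows the Lebesgue densities of $\mu$ and $\nu$ agree a.e.\ on $R$, and both measures vanish on the Lebesgue-null complement of $R$ in $\rn\setminus f(\partial U)$, whence $\mu=\nu$ there. With these adjustments the argument is complete.
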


\begin{defin}\label{d:rect}
We say that a set $M\subset\rn$ is \textit{countably $k$-rectifiable} 
if $M$ can be written as $N\cup\bigcup_{j=1}^{\infty}\psi_j(E_j)$,
where $E_j\subset\er^k$ are measurable, $\psi_j\colon E_j\to \rn$
are Lipschitz, and $\hausk(N)=0$.
We say that  $M$ is \textit{$k$-rectifiable} if it is 
countably $k$-rectifiable and $\hausk(M)<\infty$.
\end{defin}

\begin{rmrk}\label{r:single}
We can reduce $(\psi_j)_j$ to a single locally Lipschitz mapping $\psi:E\to \rn$,
where $E\subset\rn$ is measurable. Indeed, it is enough to split the 
sets $E_j$ into small pieces and reorganize them by translations
to be mutually distant. Then we have
$$
M=\psi(E)\cup N
$$
where $\hausk(N)=0$ and we call $\psi$ to be a \textit{parametrization}
of $M$.
The set $E$ can be very wild and scattered even if $M$ is topologically nice.
\end{rmrk}

\begin{defin}
We denote the space of $k$-vectors on $\rn$ by
$\Lambda_k(\rn)$ and the space of $k$-covectors on $\rn$ by $\Lambda^k(\rn)$.
A differential $k$-form on an open set $W\subset\rn$ is a mapping 
$\boldsymbol\omega\colon W\to \Lambda^k(\rn)$.
For more information we refer to \cite{Fe}.
\end{defin}

\begin{defin} If $\sigma$ is a Radon measure on $\rn$,
$x\in\rn$, and $r>0$, we denote by $\sigma_{x,r}$ the measure
which acts on any function $\ff\in C_c(\rn)$ as 
\eqn{blowup}
$$
\langle \sigma_{x,r},\ff\rangle = \int_{\rn} \ff\Bigl(x+\frac{y-x}{r}\Bigr)\,d\sigma(y).
$$
\end{defin}

\begin{prop}[\cit{Thm.\ 11.8}{Simon}]\label{p:simon}
 Let $M\subset\rn$ be a set and 
$\sigma$ be a Radon measure on $\rn$ such that $\sigma(\rn\setminus M)=0$.
Suppose that for each $x\in\rn$ there exists 
a $k$-dimensional linear subspace $\ve\subset\rn$ such that
$$
\lim_{r\to0^+}\int_{\rn}\ff\,d\sigma_{x,r}=\int_{\ve}\ff\,d\hausk
$$
for any $\ff\in \C_c(\rn)$.
Then $M$ is countably $k$-rectifiable and $\sigma=\hausk\lfloor M$.
\end{prop}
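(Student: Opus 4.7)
The plan is to deduce rectifiability of $M$ and the identity $\sigma=\hausk\lfloor M$ from the flat-tangent-measure hypothesis in three steps: (i) extract pointwise density and flatness estimates from the blow-up convergence, (ii) reduce to sets of uniform flatness scale via a Lusin--Egorov argument, and (iii) construct Lipschitz graph parametrizations over model $k$-planes. Throughout I would replace $M$ by $\spt\sigma$, using $\sigma(\rn\setminus M)=0$ to transfer the final conclusion back to $M$.

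I would first test the convergence $\sigma_{x,r}\to\hausk\lfloor\ve_x$ against well-chosen $\ff\in\C_c(\rn)$. Smooth approximations of the indicator of $B(x,1)$ yield that $\sigma$ has $k$-dimensional density equal to $1$ at every $x\in\spt\sigma$, whereas test functions supported in thin cones around the orthogonal complement of $\ve_x$ force quantitative flatness: for every $\ep>0$ there exists $r_0(x,\ep)>0$ such that, for all $r<r_0$, $\spt\sigma\cap B(x,r)$ lies in the $\ep r$-neighborhood of the affine plane $x+\ve_x$. Since $x\mapsto r_0(x,\ep)$ is measurable and $x\mapsto\ve_x$ is measurable into the Grassmannian, a Lusin--Egorov argument exhausts $\spt\sigma$, up to a $\sigma$-null set, by countably many compact sets $K_m$ on which $r_0(\cdot,1/m)\ge\rho_m>0$ and $x\mapsto\ve_x$ is uniformly continuous. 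Fixing a reference plane $V_0=\ve_{x_0}$ for some $x_0\in K_m$, uniform flatness and small oscillation of $\ve_x$ imply that the orthogonal projection onto $x_0+V_0$ is bi-Lipschitz from $K_m\cap B(x_0,\rho_m/2)$ onto its image and its inverse extends Lipschitz-continuously. Covering each $K_m$ by finitely many such balls and letting $m$ vary exhibits $\spt\sigma$ as a countable union of Lipschitz graphs, which is exactly countable $k$-rectifiability in the sense of Definition~\ref{d:rect}.

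For the identification $\sigma=\hausk\lfloor M$ I would invoke symmetry of the tangent analysis. Once $\spt\sigma$ is countably $k$-rectifiable, standard theory recovered from a parametrization $\psi$ as in Remark~\ref{r:single} shows that $\hausk\lfloor\spt\sigma$ admits, at $\hausk$-a.e.\ point, a tangent measure equal to $\hausk$ on the approximate tangent plane, which must coincide with $\ve_x$ since both describe the same blow-up limit. Thus $\sigma$ and $\hausk\lfloor\spt\sigma$ share the same tangent measure at $\sigma$-a.e.\ point; Besicovitch differentiation then identifies the two Radon measures, and combining this with $\sigma(\rn\setminus M)=0$ gives $\sigma=\hausk\lfloor M$ up to an $\hausk$-null correction.

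The chief obstacle is the combined Lusin--Egorov and Lipschitz graph step: extracting a \emph{uniform} flatness scale $\rho_m>0$ on positive-$\sigma$-measure compacta and converting pointwise flatness together with small tilt of $\ve_x$ into an honest Lipschitz constant for the graph is the technical heart of the argument, and is precisely why the hypothesis is required at every point rather than merely almost every one.
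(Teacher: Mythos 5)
The paper does not actually prove this proposition; it is quoted verbatim from Simon's lectures (Thm.\ 11.8), so your attempt can only be measured against the standard argument. Your overall architecture (Egorov to obtain uniform scales, Lipschitz graphs over model planes, then a differentiation argument for $\sigma=\hausk\lfloor M$) is the right one, but the very first step contains a genuine gap on which everything else leans. From the weak* convergence $\sigma_{x,r}\to\hausk\lfloor\ve_x$ you conclude that for all small $r$ the \emph{support} of $\sigma$ in $B(x,r)$ lies in the $\ep r$-neighbourhood of $x+\ve_x$. This does not follow: the convergence only says that the $\sigma$-mass of $B(x,r)$ outside that slab is $o(r^k)$, and the support may contain points far from the plane that carry negligible mass. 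For instance, let $\sigma=\hausk\lfloor V+\sum_j\hausk\lfloor\Sigma_j$, where $V$ is a $k$-plane and the $\Sigma_j$ are round $k$-spheres of rapidly shrinking radii $\rho_j$ whose centres $p_j$ converge to a point $p\in V$ while $\dist(\Sigma_j,V)$ stays comparable to $|p_j-p|$. If $\rho_j$ decays fast enough, the blow-up hypothesis holds at every point of $\spt\sigma$ (in particular $\sigma$-a.e., which is the reading under which the proposition is applied in Theorem \ref{t:rect}), yet $\spt\sigma\cap B(p,r)$ is never contained in an $\ep r$-slab around $V$. Consequently your bi-Lipschitz projection step, which needs the whole set --- not merely most of its measure --- to sit in a cone, does not go through as written.

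The missing idea is the classical one: the geometric cone condition must be derived for \emph{pairs} of points, using the lower density bound at the second point. One restricts by Egorov to a compact $A$ of positive measure on which both the blow-up convergence and a bound $\sigma(B(z,\rho))\ge c\rho^k$ hold at uniform scales. If $x,z\in A$ are close and $\dist(z,x+\ve_x)>\ep|x-z|$, then $B\bigl(z,\tfrac{\ep}{2}|x-z|\bigr)$ carries mass $\gtrsim(\ep|x-z|)^k$ while lying inside $B(x,2|x-z|)$ and away from the slab around $x+\ve_x$, contradicting the blow-up at $x$. This yields a two-point cone condition on $A$, from which the Lipschitz-graph decomposition follows; your remaining steps (exhausting by such sets and identifying $\sigma$ with $\hausk\lfloor M$ via densities and Besicovitch differentiation) are then sound in outline. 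Note finally that both the statement and your proof should read ``for $\sigma$-a.e.\ $x$,'' and the blow-ups need the normalisation $r^{-k}$; as literally written the hypothesis would force $\sigma_{x,r}\to0$ at every point off the support.
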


\begin{defin}\label{d:orient}  
Let $M\subset\rn$ be a countably $k$-rectifiable set.
The linear subspace $\ve$ from Proposition \ref{p:simon}
is called an \textit{approximate tangent space} to $M$ at $x\in M$.
By \cite[Thm.\ 11.6]{Simon}, an approximate tangent plane
exists at  $\hausk$-a.e.\ $x\in M$; by \cite[Rmrk.\ 11.5]{Simon}
it is uniquely determined up to a $\hausk$-null set.

We can consider an \textit{orientation} on $M$, by this we mean
that the approximate tangent space is oriented at $\hausk$-a.e.\ $x\in M$,
so that the
$k$-vector field $\xi$ on $M$,
where $\xi(x)$ is obtained as
the exterior product
of an ortonormal basis of the approximate tangent space to $M$ at $x$,
is $\hausk$-measurable.
We write the structure of orientation on $M$ as $(M,\xi)$
and call $(M,\xi)$ an \textit{oriented $k$-rectifiable set.}
A locally Lipschitz mapping $\psi\colon E\to \rn$, where $E\subset\er^k$,
is measurable,
is called a \textit{positive parametrization} of $(M,\xi)$ 
if $\psi$ is a parametrization of  $M$ and $(D_1\psi_1(t),\dots,D_k\psi(t))$
is a positive basis of the approximate tangent space to $M$ at $\psi(t)$
for a.e.\ $t\in E$. Here, if $E$ is not a neighborhood of $t$,
we use derivatives of a Lipschitz extension of $\psi$, they are
independent of the choice of the extension a.e.\ in $E$.
\end{defin}

\begin{rmrk} 
Similarly to the argument of Remark \ref{r:single}, it can be shown that
a positive parametrization of an oriented $k$-rectifiable set always exists.
\end{rmrk}

\begin{defin}\label{d:curr} 
Let $\nu$ be a measure with values in $\Lambda_k(\rn)$. We say 
that $\nu$ is an \textit{integer multiplicity $k$-rectifiable current}
if it acts on differential $k$-forms as 
$$
\langle \nu,\boldsymbol\omega\rangle = \int_{M}\theta(x)\langle\boldsymbol\omega(x),\xi(x)\rangle\,d\hausk(x),
$$
where $(M,\xi)$ is an oriented countably $k$-rectifiable set
and $\theta\colon M\to\mathbb N$
is a $\hausk$-measurable function (multiplicity).
We write $\nu=\nu(M,\xi,\theta)$. 
We call $\nu$ a \textit{simple $k$-rectifiable current},
and simplify to $\nu(M,\xi)$,
if $\theta\equiv1$ (this is our situation throughout the paper).
\end{defin}

If we have a structure of a simple $k$-rectifiable current
on $M$ where $M$ is a connected topological manifold,
we may want to express that the orientation is compatible
with the topological orientation of $M$. The right 
interepretation of this is the property of being boundaryless.

\begin{defin}\label{d:bdless} Let $W\subset\rn$ be an open set
and $\nu=\nu(M,\xi)$ is an integer multiplicity $k$-rectifiable current
with $M\subset W$. We say that $\nu$ is \textit{boundaryless in $W$}
if $\langle\nu, d\boldsymbol\omega\rangle=0$ for each 
smooth differentiable $(k{-}1)$-form $\boldsymbol\omega$ with compact support 
in $W$. 
\end{defin}

\begin{defin}\label{d:distrjac}
Let $\Omega\subset\er^2$ be an open set and 
$f\colon\Omega\to\er^2$ be a continuous $BV$ mapping.
Then we define the \textit{distributional Jacobian}
$\Det Df$ as 
$$
\langle Df,\ff\rangle = \lim_{j\to \infty}
\int_{\Omega}\ff(x)\,J_{f_j}(x)\,dx,
$$
where $f_j$ are standard convolution approximations of $f$.
The limit has a good sense as integration by parts leads to
a valid duality between measures and continuous functions, 
see Remark \ref{r:distrjac}.

If $u\in \BV(\Omega)$ is continuous and $\ff\colon\Omega\to\er$
is smooth, we need neither approximation nor distributional differentiation
to give sense to the determinant  $\det (D\ff,Df_2)$. Therefore we 
denote it by lowercase ``det'' although it is a measure. Namely,
$$
\langle\det (D\ff,Du),\eta\rangle = 
\langle D_2u,\eta\,D_1\ff\rangle -\langle D_1u,\eta\,D_2\ff\rangle,\qquad
\eta\in \C_c(\Omega).
$$

\end{defin}

\begin{rmrk}\label{r:distrjac}
If $\Phi\colon\er^2\to\er^2$ is a $\C^1$-mapping satisfying $\dive\Phi\equiv 1$
and $\ff\in \C_c^{\infty}(\Omega)$, then 
$$
\langle \Det Df,\ff\rangle = 
\langle \det (D\ff,Df_1),\,\Phi_2\circ f\rangle
-\langle \det (D\ff,Df_2),\,\Phi_1\circ f\rangle
,\qquad \ff\in \C_c^{\infty}(\Omega).
$$
In particular,
$$
\langle \Det Df,\ff\rangle = -\langle \det(D\ff, Df_2), \,f_1 \rangle
,\qquad \ff\in \C_c^{\infty}(\Omega),
$$
which is frequently used as a definition.  
\end{rmrk}


\section{Gradient of the inverse}

In this section we prove the part (a) of Theorem \ref{t:identity}.
We proceed similarly as in \cite{HKM}.
Recall that $f\colon\Omega\to\er^2$ is a sense preserving 
$BV$ homeomorphism.
Without loss of generality, we can restrict our attention to one coordinate, 
namely we want to prove that 
\eqn{special}
$$
D_2(f^{-1})_2(f(U))=D_1f_1(U)
$$
for each open set $U\subset\er^2$.
To this end, we use an auxiliary mapping
$$
g(x)=(f_1(x),x_2).
$$

\begin{lemma}\label{l:withsmooth}
Let  $U\subset\subset\Omega$ be an open set. 
Then
$$
\int_{\er^2}\eta(z)\,\deg(g,U,z)\,dz=
\langle D_1f_1, \eta\circ g\rangle
$$
for each $\eta\in\C_c^{\infty}(\er^2)$ with
$\spt\eta\cap g(\partial U)=\emptyset$
and
$$
\int_{\er^2\setminus g(\partial U)}|\deg(g,U,z)|\,dz
\le |D_1f_1|(U).
$$
\end{lemma}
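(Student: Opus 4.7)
The plan is to reduce the statement to the smooth degree formula (Proposition \ref{p:degree}) by approximating $f_1$ via mollification. Let $\rho_\epsilon$ be a standard mollifier and set $f_1^\epsilon := f_1 * \rho_\epsilon$; for $\epsilon$ sufficiently small, $f_1^\epsilon$ is smooth on a neighborhood of $\bar U$ and the auxiliary map $g^\epsilon(x) := (f_1^\epsilon(x), x_2)$ satisfies $J_{g^\epsilon} = D_1 f_1^\epsilon$. Proposition \ref{p:degree} then yields
$$
\int_{\er^2} \eta(z)\, \deg(g^\epsilon, U, z)\, dz = \int_U \eta(g^\epsilon(x))\, D_1 f_1^\epsilon(x)\, dx,
$$
and the identity in the lemma will follow by sending $\epsilon \to 0$.

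For the passage to the limit I would argue as follows. Because $f_1$ is continuous (being a coordinate of a homeomorphism) and $\bar U$ is compact in $\Omega$, $g^\epsilon \to g$ uniformly on $\bar U$. Since $\spt\eta$ sits at positive distance $\delta$ from $g(\partial U)$, the standard stability of the Brouwer degree under uniform perturbation gives $\deg(g^\epsilon, U, z) = \deg(g, U, z)$ for every $z \in \spt\eta$ once $\|g^\epsilon - g\|_{L^\infty(\partial U)} < \delta$; hence the left-hand side equals $\int \eta(z)\, \deg(g, U, z)\, dz$ for all $\epsilon$ small. On the right-hand side I would split $\eta(g^\epsilon)\, D_1 f_1^\epsilon = [\eta(g^\epsilon) - \eta(g)]\, D_1 f_1^\epsilon + \eta(g)\, D_1 f_1^\epsilon$: the first piece is dominated by $\mathrm{Lip}(\eta)\, \|g^\epsilon - g\|_\infty\, |D_1 f_1^\epsilon|(U)$, and the uniform bound $|D_1 f_1^\epsilon|(U) \le |D_1 f_1|(V)$ for some fixed $V$ with $\bar U \subset V \subset\subset \Omega$ (obtained via Fubini from the convolution structure) makes it vanish; the second piece converges to $\langle D_1 f_1,\, \eta \circ g\rangle$ because $\eta \circ g$ extends by zero outside $U$ to a function in $C_c(\Omega)$---continuity across $\partial U$ holds precisely because $g(\partial U) \cap \spt\eta = \emptyset$---and the measures $D_1 f_1^\epsilon\, dx$ converge weakly in the Radon sense to $D_1 f_1$ on $\Omega$.

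For the second assertion I would simply dualize the identity just proved. For any $\eta \in C_c^\infty(\er^2 \setminus g(\partial U))$ with $\|\eta\|_\infty \le 1$,
$$
\Bigl| \int \eta(z)\, \deg(g, U, z)\, dz \Bigr| = |\langle D_1 f_1,\, \eta \circ g\rangle| \le \|\eta \circ g\|_\infty\, |D_1 f_1|(U) \le |D_1 f_1|(U),
$$
and taking the supremum over such $\eta$ reconstructs $\int_{\er^2 \setminus g(\partial U)} |\deg(g, U, z)|\, dz$. The main difficulty is the passage to the limit in the right-hand side of the smooth degree formula: one must simultaneously exploit the uniform convergence of the composition $\eta(g^\epsilon) \to \eta(g)$ and the weak convergence of the measures $D_1 f_1^\epsilon\, dx \to D_1 f_1$, and the hypothesis $\spt\eta \cap g(\partial U) = \emptyset$ is precisely what allows $\eta \circ g$ to extend cleanly to an admissible test function compactly supported in $\Omega$.
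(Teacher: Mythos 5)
Your proposal is correct and follows essentially the same route as the paper: apply the smooth degree formula (Proposition \ref{p:degree}) to the mollified map $g^\epsilon=(f_1*\rho_\epsilon,x_2)$ with $J_{g^\epsilon}=D_1f_1^\epsilon$, pass to the limit using degree stability on $\spt\eta$ and weak* convergence of $D_1f_1^\epsilon\,dx$ against the continuous compactly supported extension of $(\eta\circ g)\chi_U$, and obtain the $L^1$ bound on the degree by duality over test functions $|\eta|\le 1$. The paper leaves the limit passage implicit ("approximate $g$ by convolution and pass to the limit"); your write-up supplies exactly the details needed to justify it.
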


\begin{proof}
Assume first that $g$ is smooth. 
Using the degree formula (Proposition \ref{p:degree})
we compute
$$
\aligned
\int_{\er^2}\eta(z)\,\deg(g,U,z)\,dz
&=\int_{U}\eta(g(x))\,J_g(x)\,dx
\\
&=\int_{U}\eta(g(x))\,D_1f_1(x)\,dx.
\endaligned
$$
In the general case we approximate $g$ by convolution and 
pass to the limit.
Now, we use the fact that
$$
\int_{\er^2\setminus g(\partial U)}\!\!\!\!\!\!|\deg(g,U,z)|\,dz
=\sup\Bigl\{\int_{\er^2}\!\!\eta(z)\,\deg(g,U,z)\,dz\colon 
\eta\in\C_c^{\infty}(\er^2{\setminus} g(\partial U)),\,|\eta|\le 1\Bigr\}.
$$
\end{proof}

\begin{lemma}\label{l:degg}
Let  $U\subset\subset\Omega$ be an open set
and $|g(\partial U)|=0$.
Then
$$
\int_{\er^2}\deg (g,U,z)\,dz= D_1f_1(U).
$$
\end{lemma}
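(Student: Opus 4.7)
The plan is to apply Lemma~\ref{l:withsmooth} to a sequence of test functions $\eta_j\in C_c^\infty(\er^2)$ approximating $\mathbf 1_{\er^2\setminus g(\partial U)}$, and pass to the limit on both sides. Since $|g(\partial U)|=0$, the left-hand side will tend to $\int\deg(g,U,z)\,dz$; the real work reduces to controlling the right-hand side, whose natural limit differs from $D_1f_1(U)$ only by a contribution from the set $U\cap g^{-1}(g(\partial U))$.

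Fix a ball $B\supset g(\overline U)$ and choose $\eta_j\in C_c^\infty(B)$ with $0\le\eta_j\le 1$, $\eta_j\equiv 0$ on a $1/j$-neighborhood of $g(\partial U)$ and $\eta_j\equiv 1$ off a slightly larger neighborhood (and off a collar of $\partial B$), so that $\spt\eta_j\cap g(\partial U)=\emptyset$ and $\eta_j\to\mathbf 1_{\er^2\setminus g(\partial U)}$ pointwise. Applying Lemma~\ref{l:withsmooth} to each $\eta_j$ yields
\[
\int_{\er^2}\eta_j(z)\,\deg(g,U,z)\,dz=\int_U \eta_j(g(x))\,dD_1f_1(x).
\]
The left side tends to $\int_{\er^2}\deg(g,U,z)\,dz$ by dominated convergence, using the integrable dominator $|\deg(g,U,\cdot)|$ (bounded by $|D_1f_1|(U)$ via Lemma~\ref{l:withsmooth}) and the fact that $\eta_j\to 1$ a.e.\ because $|g(\partial U)|=0$. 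For the right side, $g(x)\in B$ for every $x\in U$, so $\eta_j(g(x))\to 1$ precisely when $g(x)\notin g(\partial U)$; dominated convergence against the finite measure $|D_1f_1|$ on $U$ produces the limit $D_1f_1(U\setminus g^{-1}(g(\partial U)))$.

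The main obstacle is therefore to show $|D_1f_1|\bigl(U\cap g^{-1}(g(\partial U))\bigr)=0$. The tools are the $\BV$ slicing identity
\[
|D_1f_1|(A)=\int_\er |D(f_1(\cdot,s))|(A_s)\,ds,\qquad A_s=\{x_1:(x_1,s)\in A\},
\]
valid for Borel $A\subset\Omega$, together with the co-area formula of Proposition~\ref{p:coarea}. Setting $T_s=\{t\in\er:(t,s)\in g(\partial U)\}$, Fubini and $|g(\partial U)|=0$ force $|T_s|=0$ for a.e.\ $s$. For such $s$ the slice $u_s:=f_1(\cdot,s)$ is a continuous one-dimensional $\BV$ function, so Proposition~\ref{p:coarea} applied with $\eta=\mathbf 1_{u_s^{-1}(T_s)}$ delivers
\[
|Du_s|\bigl(u_s^{-1}(T_s)\bigr)=\int_{T_s}\#u_s^{-1}(t)\,dt=0,
\]
since $\#u_s^{-1}(\cdot)$ is integrable (its integral equals $|Du_s|$, which is finite) and $|T_s|=0$. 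Because $A_s\subset u_s^{-1}(T_s)$ when $A=U\cap g^{-1}(g(\partial U))$, the slicing identity gives $|D_1f_1|(A)=0$, closing the proof.
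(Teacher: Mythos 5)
Your proposal is correct, and it follows the same basic scheme as the paper's proof: choose a sequence of cut-off functions $\eta_j\in\C_c^\infty(\er^2)$ with $\spt\eta_j\cap g(\partial U)=\emptyset$ and $\eta_j\nearrow 1$ off $g(\partial U)$, apply Lemma~\ref{l:withsmooth}, and pass to the limit on both sides, using the $L^1$ bound on $\deg(g,U,\cdot)$ and the finiteness of $D_1f_1$ to justify dominated convergence.

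What distinguishes your write-up is that you explicitly isolate and close a step the paper leaves implicit. Since $\eta_j\circ g\to 1$ pointwise only on $U\setminus g^{-1}(g(\partial U))$ (and $\equiv 0$ on $U\cap g^{-1}(g(\partial U))$, because $\spt\eta_j$ avoids $g(\partial U)$), the dominated convergence on the right-hand side yields $D_1f_1\bigl(U\setminus g^{-1}(g(\partial U))\bigr)$, and one still has to show $|D_1f_1|\bigl(U\cap g^{-1}(g(\partial U))\bigr)=0$. The paper simply records the limit as $D_1f_1(U)$. Your argument for this missing identity is sound: Fubini and $|g(\partial U)|=0$ give $|T_s|=0$ for a.e.\ $s$; the $\BV$ slicing identity reduces the problem to the one-dimensional slices $u_s=f_1(\cdot,s)$; and Proposition~\ref{p:coarea} in dimension one (Banach indicatrix) shows $|Du_s|\bigl(u_s^{-1}(T_s)\bigr)=\int_{T_s}\#u_s^{-1}(t)\,dt=0$ because the indicatrix is integrable and $T_s$ is null. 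The inclusion $A_s\subset u_s^{-1}(T_s)$ for $A=U\cap g^{-1}(g(\partial U))$ is exactly as you state, since $g(x_1,s)=(f_1(x_1,s),s)$. So your proof is a correct and more complete version of the paper's argument rather than a genuinely different route.
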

\begin{proof}
Let $\eta_j\in\C_c^{\infty}(\er^2)$ be smooth functions satisfying
$\spt\eta_j\cap g(\partial U)=\emptyset$ and 
$\eta_j\nearrow 1$ on $g(U)\setminus g(\partial U)$. 
By Lemma \ref{l:withsmooth},
$$
\aligned
\int_{\er^2}\deg (g,U,z)\,dz&= 
\lim_{j\to\infty}\int_{\er^2}\eta_j(z)\deg (g,U,z)\,dz
=\lim_{j\to\infty}\langle D_1f_1,\;\eta_j\circ g\rangle 
\\&=D_1f_1 (U).
\endaligned
$$
The passage to the limit is justified as
$\deg (g,U,\cdot)$ is integrable by Lemma \ref{l:withsmooth}
and $D_1f_1$ is a finite measure.
\end{proof}

\begin{proof}[Proof of Theorem \ref{t:identity}(a)]
Let $U\subset\subset\Omega$ be an open set.
We find a monotone exhaustion
$U=\bigcup_{j}U_j$ where, for each $j=1,2,\dots$,
$U_j\subset\subset U$ is an open set with a smooth boundary
and $g$ has a $BV$ trace on $\partial U_j$.
Then
$|g(\partial U_j)|=0$.
By Lemma \ref{l:degg} we have
$$
\adj_{22}Df(U_j)=D_1f_1(U_j)=\int_{\er^2}\deg(g,U_j,z)\,dz.
$$
Now, we apply Lemma \ref{l:degg} to the inverse, but instead 
of $g(x)=(f_1(x),x_2)$ we consider $h(y)=(y_1,(f^{-1})_2(y))$.
We obtain 
$$
D_2(f^{-1})_2(f(U_j))=\int_{\er^2}\deg(h,f(U_j),z)\,dz.
$$
However, by the formula on composition of degree,
we have
$$
\deg(g,U_j,z)=\deg(h,f(U_j),z),\qquad z\in \er^2\setminus g(\partial U_j),
$$
as $g=h\circ f$ and $f$ is a sense preserving homeomorphism.
Therefore
$$
D_2(f^{-1})_2(f(U_j))=\adj_{22}Df(U_j),\qquad j=1,2,\dots,
$$
and passing to the limit we obtain
$$
D_2(f^{-1})_2(f(U))=\adj_{22}Df(U).
$$
By a routine measure-theoretic approximation we obtain the formula
for each Borel set.
\end{proof}

\section{Distributional Jacobian}

Let $\Omega\subset\er^2$ be a bounded open set and $f=(f_1, f_2)\colon \Omega\to\er^2$ be a bounded sense-preserving $BV$-homeomorphism.

Let $G\subset\subset\Omega$ be an open set with a smooth boundary. Then  each connected component $T$
of the boundary of $G$ is a smooth Jordan curve which can be parametrized as $\gamma\colon [0,1]\to\er^2$, where $\gamma$ is smooth, regular,
and oriented so that $\det(\mathbf n^G(\gamma(t)),\gamma'(t))>0$, where $\mathbf n^G$ is the outward normal to $G$.
Given a continuous function $u$ and a $\BV$ function $v$ on $T$, we consider the Stieltjes integral $\int_{T}u\,dv$
(and thus also the Stieltjes integral over $\partial G$) which is defined via the parametrization $\gamma$ but does not depend on it.

\begin{lemma}
Let $u,v\colon\Omega\to\er$ be continuous $BV$ functions and $\eta\in \C_c^{\infty}(\Omega)$ be nonnegative. Then
\eqn{jacu}
$$
\langle\det\left(Dv,D\eta\right),u\rangle
=\int_{0}^{\infty}\Bigl(\int_{\{\eta=t\}}u\,dv\Bigr)\,dt
$$
and
\eqn{jacv}
$$
\langle\det\left(Du,D\eta\right),v\rangle
=\int_{0}^{\infty}\Bigl(\int_{\{\eta=t\}}v\,du\Bigr)\,dt.
$$
\end{lemma}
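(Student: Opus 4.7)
The plan is to focus on \eqref{jacu}; the companion identity \eqref{jacv} then follows by swapping $u$ and $v$, and the two are consistent because on every closed curve one has $\int u\,dv+\int v\,du=0$, while $\langle\det(Dv,D\eta),u\rangle+\langle\det(Du,D\eta),v\rangle=0$ can be checked directly by integrating by parts against the smooth $\eta$.

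First I would dispatch the smooth case $u,v\in\C^\infty(\Omega)$. By Proposition~\ref{Sard} applied to $\eta$, for almost every $t>0$ the level set $C_t=\{\eta=t\}$ is a smooth compact $1$-manifold bounding the open set $\{\eta>t\}$. Orienting $C_t$ as $\partial\{\eta>t\}$ and applying Stokes' theorem to the smooth $1$-form $u\,dv$,
$$
\int_{C_t}u\,dv=\int_{\{\eta>t\}}du\wedge dv=\int_{\{\eta>t\}}\det(Du,Dv)\,dx.
$$
Integrating in $t$, using $\eta(x)=\int_0^\infty\mathbf 1_{\{\eta>t\}}(x)\,dt$ (since $\eta\ge 0$), and Fubini, I obtain $\int_0^\infty\int_{C_t}u\,dv\,dt=\int\eta\det(Du,Dv)\,dx$. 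A routine integration by parts in each variable shifts the derivatives from $u$ onto $\eta$ (the mixed partials of $v$ cancelling) and converts this into $\int u\det(Dv,D\eta)\,dx=\langle\det(Dv,D\eta),u\rangle$.

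To pass to continuous $u,v\in\BV$, I would mollify: let $v_\varepsilon=v*\rho_\varepsilon$ and $u_\delta=u*\rho_\delta$, which converge uniformly on compact subsets and satisfy $D_iv_\varepsilon\rightharpoonup^*D_iv$, $D_iu_\delta\rightharpoonup^*D_iu$ near $\spt\eta$. Sending $\delta\to 0$ in the smooth identity applied to $(u_\delta,v_\varepsilon)$ is routine (uniform convergence against the bounded density $\nabla v_\varepsilon\cdot T$ on each $C_t$, then dominated convergence in $t$) and produces the same identity for continuous $u$ and smooth $v_\varepsilon$. Letting $\varepsilon\to 0$, the right-hand side equals $\int uD_2\eta\,dD_1v_\varepsilon-\int uD_1\eta\,dD_2v_\varepsilon$, which converges to $\langle\det(Dv,D\eta),u\rangle$ because $uD_j\eta\in\C_c(\Omega)$.

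The step I expect to be the main obstacle is the left-hand side limit $\int_0^\infty\int_{C_t}u\,dv_\varepsilon\,dt\to\int_0^\infty\int_{C_t}u\,dv\,dt$. For a.e.\ $t$ one must first interpret $v|_{C_t}$ as a $\BV$ function on $C_t$ so that the Stieltjes integral $\int_{C_t}u\,dv$ is defined; this is the one-dimensional $BV$ slicing theory along the smooth foliation $\{\eta=t\}$. A uniform $L^1(dt)$ bound is supplied by Proposition~\ref{p:coarea} applied to the smooth $\eta$,
$$
\int_0^\infty\int_{C_t}|\nabla v_\varepsilon|\,d\hauso\,dt=\int_\Omega|\nabla v_\varepsilon||\nabla\eta|\,dx\le\|\nabla\eta\|_\infty|Dv_\varepsilon|(\spt\eta),
$$
bounded uniformly in $\varepsilon$. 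Combined with uniform convergence $v_\varepsilon|_{C_t}\to v|_{C_t}$ on each smooth Jordan component of $C_t$, a Helly-type selection yields pointwise convergence of the curve integrals for a.e.\ $t$, and Lebesgue dominated convergence then closes the argument.
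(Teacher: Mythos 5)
Your overall strategy -- establish the smooth case via Stokes' theorem, then mollify $u$ and $v$ separately -- is close in spirit to the paper's proof, but the final mollification step ($\varepsilon\to0$) has a genuine gap. To pass to the limit in $\int_0^\infty\int_{C_t}u\,dv_\varepsilon\,dt$ you appeal to a Helly-type selection. Helly's convergence theorem (convergence of Stieltjes integrals under pointwise convergence of integrators) requires, for each \emph{fixed} $t$, a uniform-in-$\varepsilon$ bound on $\mathrm{Var}(v_\varepsilon;C_t)$. Your estimate $\int_0^\infty\mathrm{Var}(v_\varepsilon;C_t)\,dt\le\|\nabla\eta\|_\infty\,|Dv_\varepsilon|(\spt\eta)\le C$ is only an $L^1(dt)$ bound; by Fatou it yields $\liminf_\varepsilon\mathrm{Var}(v_\varepsilon;C_t)<\infty$ for a.e.\ $t$, i.e.\ a bounded subsequence that depends on $t$, and this cannot be fed into a dominated convergence argument across $t$.

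The fix is precisely the ingredient the paper isolates as \eqref{smallbyparts}: for a.e.\ $t$ the trace $u|_{C_t}$ is a continuous $BV$ function on the closed curve $C_t$, and Stieltjes integration by parts on a closed curve gives
$\int_{C_t}u\,dv_\varepsilon=-\int_{C_t}v_\varepsilon\,du\to-\int_{C_t}v\,du=\int_{C_t}u\,dv$,
using only uniform convergence of $v_\varepsilon$. The dominating function $\|v\|_\infty\int_{C_t}|du|$ then has finite $t$-integral by the same co-area computation you already wrote down. The paper packages this slightly differently: it first proves \eqref{jacu} for smooth $v$ and merely continuous $BV$ $u$ by applying the co-area formula to $\int_\Omega u\det(Dv,D\eta)\,dx$ (so it avoids your first mollification in $u$ altogether), establishes the bulk and boundary integration-by-parts identities \eqref{byparts} and \eqref{smallbyparts} for continuous $BV$ pairs, and then rewrites the right-hand side of \eqref{jacv} as $\int_0^\infty\int_{\{\eta=t\}}v\,du\,dt$, which is linear in the undifferentiated $v$, so a single mollification of $v$ passes to the limit trivially. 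Once you replace the Helly appeal with the integration-by-parts argument on $C_t$, your plan closes and is essentially equivalent to the paper's.
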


\begin{proof} 
By the Morse--Sard theorem (Proposition \ref{Sard}), there is a set $Z\subset\er$ of measure zero such that 
$\{\eta>t\}$ has a smooth boundary for all $t\in (0,+\infty)\setminus Z$.
Let $t>0$ be such that $\{\eta=t\}$ has a smooth boundary.
Then we have
$$
D\eta(x)= -\mathbf n^{\{\eta>t\}}(x)\,|D\eta(x)|,\qquad x\in\{\eta=t\}.
$$
To prove \eqref{jacu} and \eqref{jacv},
let us first assume that $v$ is smooth.
Applying the co-area formula \eqref{coarea} we obtain
$$
\aligned
\langle\det\left(Dv,D\eta\right),u\rangle&=
\int_{\Omega}u\det\left(Dv,D\eta\right)\,dx=
\\&= -\int_{0}^{\infty}\left(\int_{\{\eta=t\}}u \det(D v,\mathbf n^{\{\eta>t\}})\,d\hauso\right)\,dt
\\&=\int_{0}^{\infty}\Bigl(\int_{\{\eta=t\}}u\,dv\Bigr)\,dt.
\endaligned
$$
This proves \eqref{jacu} in the case of smooth $v$.
If $u$ is also smooth, we observe that
\eqn{byparts}
$$
\langle\det\left(Dv,D\eta\right),u\rangle=-\langle\det\left(Du,D\eta\right),v\rangle
$$
and
\eqn{smallbyparts}
$$
\int_{\{\eta=t\}}u\,dv=-\int_{\{\eta=t\}}v\,du
$$
holds for $t>0$, $t\notin Z$.
A simple approximation argument shows that \eqref{byparts}  holds
in general for $u,v$ which are both continuous and $\BV$ on $\Omega$
and \eqref{smallbyparts} holds for $t>0$ such that $t\notin Z$ and 
$u$ and $v$ are continuous $\BV$ on $\{\eta=t\}$.
Then, assuming still that $v$ is smooth, using \eqref{byparts}
and \eqref{smallbyparts}
we obtain
$$
\aligned
\langle\det\left(Du,D\eta\right),v\rangle&=
\langle\det\left(Dv,D\eta\right),u\rangle
\\&=\int_{0}^{\infty}\Bigl(\int_{\{\eta=t\}}u\,dv\Bigr)\,dt
=\int_{0}^{\infty}\Bigl(\int_{\{\eta=t\}}v\,du\Bigr)\,dt.
\endaligned
$$
This proves \eqref{jacv} in the case of smooth $v$.
Now, in the general case we use convolution approximation
of $v$ by smooth functions and obtain \eqref{jacv} as well.
For the passages to limits on the right hand side we 
use the dominated convergence theorem and the fact that integral over $t$ of the variation of
$v$ on $\{\eta=t\}$ is finite.
Now, interchanging the roles of $u$ and $v$ we obtain 
\eqref{jacu} in the general case.

\end{proof}

\begin{thm}\label{t:det}
Let $f\colon \Omega\to\er^2$ be a bounded sense-preserving $BV$ homeomorphism. If $E\subset\Omega$
is a Borel set, then
$\Det Df (E) = |f(E)|$.
\end{thm}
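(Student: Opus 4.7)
The plan is to use Remark \ref{r:distrjac} together with the previous lemma to re-express $\langle\Det Df,\eta\rangle$, for a test function $\eta$, as an integral over the level sets of $\eta$ of the area of an image region, and then to recognize the result as integration of $\eta\circ f^{-1}$ against Lebesgue measure on $f(\Omega)$. It suffices to prove that for every nonnegative $\eta\in\C^{\infty}_c(\Omega)$,
$$
\langle\Det Df,\eta\rangle=\int_{f(\Omega)}\eta(f^{-1}(y))\,dy,
$$
because the right-hand side equals $\int_\Omega\eta\,d\nu$ for $\nu(E)=|f(E)|$, and a routine measure-theoretic approximation (as at the end of the proof of Theorem \ref{t:identity}(a)) then extends the identity to arbitrary Borel sets. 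By Remark \ref{r:distrjac} and antisymmetry of the determinant,
$$
\langle\Det Df,\eta\rangle=-\langle\det(D\eta,Df_2),f_1\rangle=\langle\det(Df_2,D\eta),f_1\rangle,
$$
and applying \eqref{jacu} with $v=f_2$, $u=f_1$ yields
$$
\langle\Det Df,\eta\rangle=\int_0^\infty\Bigl(\int_{\{\eta=t\}}f_1\,df_2\Bigr)dt.
$$

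By the Morse--Sard theorem (Proposition \ref{Sard}), for almost every $t>0$ the level set $\{\eta=t\}$ is a smooth compact $1$-submanifold of $\Omega$, namely a finite disjoint union of smooth Jordan curves forming the boundary of the precompact open set $\{\eta>t\}$. The geometric heart of the proof is then the identity
$$
\int_{\{\eta=t\}}f_1\,df_2=|f(\{\eta>t\})|
$$
for almost every such $t$. I would parametrize $\{\eta=t\}$ as the positively oriented boundary of $\{\eta>t\}$ (with $-D\eta/|D\eta|$ the outward normal to $\{\eta>t\}$, as in the previous lemma). Since $f$ is a sense-preserving homeomorphism, the image of this oriented curve is the positively oriented boundary of the open set $f(\{\eta>t\})$, and Green's theorem identifies $\int_{\{\eta=t\}}f_1\,df_2$ with the contour integral $\oint y_1\,dy_2$ over this image boundary, which equals $|f(\{\eta>t\})|$. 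This is the step I expect to be the main obstacle, since the image is only a $BV$ Jordan curve and need not be rectifiable in general; I would handle it by combining the Luzin $BV$-approximation (Proposition \ref{p:Luzin}) of $f$ with a smooth mollification of $f_2$ along the level curve, so that Green's theorem becomes classical for the approximations while the Stieltjes integral and the enclosed area depend only on the values of $f$ on the level curve in a way that permits passage to the limit.

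Finally, Fubini (layer cake) gives
$$
\int_0^\infty|f(\{\eta>t\})|\,dt=\int_{f(\Omega)}\eta(f^{-1}(y))\,dy,
$$
which completes the proof.
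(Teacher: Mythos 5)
Your proposal follows the paper's proof essentially step for step: Remark \ref{r:distrjac} together with \eqref{jacu} reduces the claim to the identity $\int_{\{\eta=t\}}f_1\,df_2=|f(\{\eta>t\})|$ for a.e.\ $t$, which the paper likewise obtains by transporting the Stieltjes integral to the image Jordan curve $\{\eta\circ f^{-1}=t\}$ (using that $f$ is a sense-preserving homeomorphism) and applying Green's theorem, and then concludes by the layer-cake formula. The rectifiability obstacle you anticipate does not actually arise: by the co-area formula the integral over $t$ of the variation of $f$ along $\{\eta=t\}$ is controlled by $|Df|(\Omega)<\infty$, so for a.e.\ $t$ the restriction of $f$ to the level curve is $BV$, the image curve is rectifiable, and Green's theorem applies directly without the mollification detour.
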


\begin{proof}
We compare the distribution $\Det Df$ and the measure $\rho\colon E\mapsto |f(E)|$.
Choose a nonnegative test function $\eta\in \C_c^{\infty}(\Omega)$. By Lemma \ref{coarea}, Green's theorem and integration by means of the distribution function
we have
$$
\aligned
\langle \Det Df,\eta\rangle&=
-\langle \det (D\eta,Df_2),\,f_1\rangle
=\int_{0}^{\infty}\Bigl(\int_{\{\eta=t\}}f_1\,df_2\Bigr)\,dt
\\&=\int_{0}^{\infty}\Bigl(\int_{\{\eta\circ f^{-1}=t\}}y_1\,dy_2\Bigr)\,dt
\\&=\int_{0}^{\infty}\Bigl|\{\eta\circ f^{-1}>t\}\Bigr|\,dt=
\int_{f(\Omega)}\eta(f^{-1}(y))\,dy=\int_{\Omega}\eta\,d\rho.
\endaligned
$$
Therefore the measures $\Det Df$ and $\rho$ coincide.
\end{proof}

\section{Fundamental estimate}

\begin{thm}\label{t:key} Let $f\colon \Omega\to\er^2$ be a sense-preserving $BV$ homeomorphism and $(x,y)\in \Gamma(\Omega)$. Then there exists $r_0>0$ such that for each $r\in (0,r_0)$ we 
have
$$
r^2\le C|\mu|(B(x,r)\times B(y,r)).
$$
\end{thm}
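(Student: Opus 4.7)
The first step is a concrete identification of $|\mu|$. Writing out the components of $\mu$ as in \eqref{howmu} and using the equivalence $|\mu|\asymp\sum|\mu_{\text{comp}}|$ up to dimensional constants, one obtains
\[
|\mu|(B(x,r)\times B(y,r))\gtrsim |E_r|+|f(E_r)|+|Df|(E_r),
\]
where $E_r:=B(x,r)\cap f^{-1}(B(y,r))$. Choose $r_0>0$ small enough that $B(x,r_0)\subset\Omega$ and $B(y,r_0)\subset f(\Omega)$, using continuity of $f$ and $f^{-1}$. It then suffices to show that the right-hand side is at least $cr^2$ for $r<r_0$.

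Proceed by a case distinction on two inclusions. If $f(B(x,r))\subset B(y,r)$, then $E_r=B(x,r)$ and $|E_r|=\pi r^2$, closing via $\mu_{12}$. Symmetrically, if $f^{-1}(B(y,r))\subset B(x,r)$, then $f(E_r)=B(y,r)$ and we are done via $\mu^{12}$. Otherwise neither inclusion holds; in this remaining case one must establish $|Df|(E_r)\gtrsim r^2$. For this I apply the BV co-area formula (Proposition~\ref{p:coarea}) to $\phi(z):=|f(z)-y|$, which satisfies $|D\phi|\le|Df|$ as measures; since for $t<r$ the level set $\{\phi=t\}\cap B(x,r)$ is automatically contained in $E_r$, one has
\[
|Df|(E_r)\ge|D\phi|(E_r)\ge\int_0^r \hauso(\{\phi=t\}\cap B(x,r))\,dt.
\]
Because $f$ is a homeomorphism, $\{\phi=t\}=f^{-1}(\partial B(y,t))$ is a Jordan curve. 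If it lay entirely inside $B(x,r)$, Jordan's theorem would give $f^{-1}(B(y,t))\subset B(x,r)$, forcing $|f(E_r)|\ge\pi t^2$; thus failure of the second inclusion forces $\{\phi=t\}$ to cross $\partial B(x,r)$ for every $t$ in a subinterval of $(0,r)$ of length $\gtrsim r$.

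The main obstacle is to upgrade ``the curve leaves the ball'' to the length bound $\hauso(\{\phi=t\}\cap B(x,r))\gtrsim r$, since a Jordan curve might a priori enter the ball only in a short dip. The extra input comes from symmetrically exploiting the failure of the first inclusion: a similar Jordan-curve contradiction applied to $\{\phi=r\}$ shows that $\phi$ attains values $\ge r$ at some point of $\partial B(x,r)$, while the failure of the second inclusion together with connectedness of $f^{-1}(B(y,r))$ produces values $<r$ at another point of $\partial B(x,r)$. The intermediate value theorem on $\partial B(x,r)$ then guarantees, for every $t$ in the stated subinterval, two crossings of $\{\phi=t\}$ with $\partial B(x,r)$, and the topological role of $\{\phi=t\}$ as separator between $x$ and some point of $B(x,r)$ where $\phi\ge r$ forces the separating arc inside $B(x,r)$ to traverse a distance of order $r$. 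Integrating over $t$ yields $|Df|(E_r)\gtrsim r^2$ and closes the theorem.
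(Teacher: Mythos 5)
Your overall strategy matches the paper's: reduce to the claim $|\mu|(B(x,r)\times B(y,r))\gtrsim |E_r|+|f(E_r)|+|Df|(E_r)$ with $E_r=B(x,r)\cap f^{-1}(B(y,r))$, dispose of two easy inclusion cases via $\mu_{12}$ and $\mu^{12}$, and in the remaining case estimate $|Df|(E_r)$ from below using the co-area formula applied to $\phi=|f-y|$. The identification of $|\mu|$ and the easy cases are correct, and the idea of using co-area on $\phi$ is exactly the paper's.

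The gap is in the remaining case and it is genuine: you test both inclusions at the \emph{same} radius $r$, which does not yield a $t$-interval of definite length on which $\hauso(\{\phi=t\}\cap B(x,r))\gtrsim r$. Failure of $f(B(x,r))\subset B(y,r)$ gives a point $z\in B(x,r)$ with $\phi(z)\ge r$, so $K_t:=\{\phi=t\}$ enters $B(x,r)$ for every $t<r$; but failure of $f^{-1}(B(y,r))\subset B(x,r)$ only gives some $w\in B(y,r)$ with $f^{-1}(w)\notin B(x,r)$, and $|w-y|$ may be arbitrarily close to $r$. The topological argument forcing $K_t$ to leave $B(x,r)$ needs $f^{-1}(w)$ to lie inside $K_t$, i.e.\ $t>|w-y|$, so you only control the annulus-crossing for $t\in(|w-y|,r)$, an interval whose length you cannot bound below by a multiple of $r$. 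The IVT patch on $\partial B(x,r)$ has the same defect: it produces on the circle the value $r$ and some value $s<r$, but nothing keeps $r-s\gtrsim r$. There is also a hidden issue in the case split itself: when you are in your third case it can happen that $|Df|(E_r)\ll r^2$ (for instance if $f$ expands strongly near $x$), and the needed lower bound comes from $|E_r|$ or $|f(E_r)|$ instead — so the program of proving $|Df|(E_r)\gtrsim r^2$ there cannot succeed in general.

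The paper's fix is to test the inclusions at \emph{nested} radii $r/2$ and $r$: if $f(B(x,r/2))\subset B(y,r)$ then $B(x,r/2)\subset E_r$ and $\mu_{12}$ gives $\ge\pi(r/2)^2$; if $f(B(x,r))\supset B(y,r/2)$ then $f(E_r)\supset B(y,r/2)$ and $\mu^{12}$ gives $\ge\pi(r/2)^2$; and if both fail, then for \emph{every} $t\in[r/2,r]$ there is a point of $B(x,r/2)$ where $\phi\ge r\ge t$ (so $K_t$ meets $B(x,r/2)$) and a $w$ with $|w-y|<r/2\le t$ and $f^{-1}(w)\notin B(x,r)$ (so $K_t$ has a point outside $B(x,r)$). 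The connected set $K_t$ therefore crosses the annulus $B(x,r)\setminus B(x,r/2)$, giving $\hauso(K_t\cap B(x,r))\ge r/2$ for all $t\in[r/2,r]$, and $\int_{r/2}^r$ yields $r^2/4$. Replace your same-radius dichotomy with this $r/2$-versus-$r$ version and the rest of your argument closes.
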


\begin{proof} Choose $x_0\in\Omega$ and denote $y_0=f(x_0)$.
Set 
$$
r_0=\min\{\dist(x_0,\Omega^c),\dist(y_0,f(\Omega)^c)\}
$$ 
and
choose $r\in (0,r_0)$.
If 
$f(B(x_0,\frac12r))\subset B(y_0, r)$, then $$\mu_{12}(B(x_0,r)\times B(y_0,r))\ge |B(x_0,\frac12 r)|.$$
If 
$f(B(x_0,r))\supset B(y_0, \frac12r)$, then $$\mu^{12}(B(x_0,r)\times B(y_0,r))\ge |B(y_0,\frac12 r)|.$$
In the remaining case, for each $t\in [\frac12 r, r]$,
$K_t:=f^{-1}(\partial B(y_0,t))$
is a continuum which intersects both $\partial B(x_0,\frac12 r)$
and $\partial B(x_0,r)$ and thus its diameter is at least $\frac12 r$.
Using the co-area formula (Proposition \ref{p:coarea}), we obtain
$$
\aligned
|\mu|(B(x_0,r)\times B(y_0,r))&\ge |Df|(B(x_0,r))\ge |D|f-y_0||((B(x_0,r))
\\&\ge \int_{r/2}^r\hauso(\{|f-y_0|=t\})\ge \frac14 r^2.
\endaligned
$$
\end{proof}

\begin{thm}\label{t:keyc} 
Let $f\colon \Omega\to\er^2$ be a sense-preserving $BV$ homeomorphism and $E\subset \Omega$ be a Borel set. Then 
$$
\hausd(\Gamma(E))\le C|\mu|(\Gamma(E)).
$$ 
In particular, $\hausd(\Gamma(\Omega))<\infty$.
\end{thm}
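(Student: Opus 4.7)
The plan is to upgrade Theorem \ref{t:key} to a Euclidean density lower bound for $|\mu|$ on $\er^4$, and then invoke a standard covering argument. For $z_0 = (x_0, y_0) \in \Gamma(\Omega)$ and sufficiently small $r > 0$, I would use the inclusion
$$
B(x_0, r/\sqrt{2}) \times B(y_0, r/\sqrt{2}) \subset B(z_0, r) \subset \er^4
$$
together with Theorem \ref{t:key} applied at radius $r/\sqrt{2}$ to get
$$
\tfrac{1}{2} r^2 \le C\,|\mu|\bigl(B(x_0, r/\sqrt{2}) \times B(y_0, r/\sqrt{2})\bigr) \le C\,|\mu|(B(z_0, r)).
$$
Consequently
$$
\liminf_{r \to 0^+} \frac{|\mu|(B(z_0, r))}{r^2} \ge \frac{1}{2C}
\qquad\text{for every } z_0 \in \Gamma(\Omega).
$$

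Next I would appeal to a standard density-comparison theorem (e.g.\ \cit{2.10.19}{Fe}): if $\sigma$ is a finite Radon measure on $\er^n$ and a Borel set $A \subset \er^n$ satisfies $\limsup_{r \to 0^+} \sigma(B(z,r))/r^k \ge \theta > 0$ at every $z \in A$, then $\hausk(A) \le c(n,k)\,\theta^{-1}\sigma(A)$. Applying this with $n = 4$, $k = 2$, $\sigma = |\mu|$, $A = \Gamma(E)$, and $\theta = 1/(2C)$ produces the desired inequality $\hausd(\Gamma(E)) \le C'\,|\mu|(\Gamma(E))$. Note that $|\mu|$ is a Radon measure on $\er^4$ because each of its coordinate components is the push-forward of a finite Radon measure on $\Omega$ through the continuous graph map.

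For the ``in particular'' statement I would take $E = \Omega$ and verify $|\mu|(\Gamma(\Omega)) < \infty$ component by component via \eqref{howmu}: one has $\mu_{12}(\Gamma(\Omega)) = |\Omega|$; the off-diagonal parts are bounded in total variation by $|Df_j|(\Omega) < \infty$ since $f \in \BV(\Omega, \er^2)$; and $\mu^{12}(\Gamma(\Omega)) = |f(\Omega)| < \infty$ because $f$ is bounded.

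The only conceptual ingredient beyond Theorem \ref{t:key} is the classical density-to-Hausdorff-measure comparison, so I anticipate no essential difficulty---the substantive geometric work has already been carried out in the preceding theorem, and what remains is a bookkeeping step to convert the product-ball estimate into a Euclidean-ball estimate and then apply a standard result.
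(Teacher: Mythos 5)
Your argument is correct and is exactly what the paper's one-line proof (``It follows from Theorem \ref{t:key} by the Vitali type covering argument'') is referring to: you convert the product-ball density bound of Theorem \ref{t:key} into a Euclidean-ball lower density bound for $|\mu|$ on $\Gamma(\Omega)$, and then apply the standard density-comparison theorem (itself proved by a Vitali covering argument). The verification that $|\mu|(\Gamma(\Omega))<\infty$ component by component via \eqref{howmu} is also the right way to get the ``in particular'' clause.
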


\begin{proof}
It follows from Theorem \ref{t:key} by the Vitali type covering argument. 
\end{proof}

\section{Rectifiability}

\begin{thm}\label{t:rect}
Let $f\colon \Omega\to\er^2$ be a sense-preserving $BV$ homeomorphism
and $\Gamma$ be its graph.
Let $\tau(x,f(x))$ be the Radon-Nikodym derivative
of $D\Gamma$ with respect to $|D\Gamma|$ 
at $x$ and $\tau_1(x,f(x))$, $\tau_2(x,f(x))$ be
the columns of this matrix.
Then $\Gamma(\Omega)$ is $2$-rectifiable and $(\tau_1,\tau_2)$ is a basis
of the approximate tangent plane to $\Gamma(\Omega)$ $\hausd$-a.e.
\end{thm}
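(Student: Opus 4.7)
My plan is to verify Simon's criterion (Proposition \ref{p:simon}) for a Radon measure $\sigma$ on $\er^4$ supported in $\Gamma(\Omega)$. By Theorem \ref{t:keyc} we already know $\hausd(\Gamma(\Omega))<\infty$, so what remains is countable $2$-rectifiability together with the identification of the tangent plane, and both would follow once the blow-ups $\sigma_{z,r}$ at $\sigma$-a.e.\ point $z$ are shown to converge to $2$-dimensional Hausdorff measure on a $2$-plane. The natural candidate is $\sigma=|\mu|$: by Theorem \ref{t:key} it already satisfies the lower density estimate $\sigma(B(z,r))\ge c\,r^{2}$ for every $z\in\Gamma(\Omega)$ and every small $r>0$.

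The first constructive step is to apply the Luzin approximation (Proposition \ref{p:Luzin}) to $\Gamma=(\mathrm{id},f)\colon\Omega\to\er^4$, which is a $BV$ map. This yields Lipschitz maps $\Gamma_k\colon\er^2\to\er^4$ and a decomposition $\Omega=N\cup\bigcup_k E_k$ with $|N|=0$ and $\Gamma|_{E_k}=\Gamma_k|_{E_k}$. Each $\Gamma(E_k)=\Gamma_k(E_k)$ is then countably $2$-rectifiable as a Lipschitz image of a measurable subset of $\er^2$ (Remark \ref{r:single}), and at a point $x_0\in E_k$ which is both a Lebesgue point of $E_k$ and a point of approximate differentiability of $\Gamma$, the tangent plane to $\Gamma(E_k)$ at $(x_0,f(x_0))$ is the column span of $\nabla\Gamma(x_0)$, where $\nabla\Gamma$ denotes the absolutely continuous representative of $D\Gamma$. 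At such an AC point the density $\rho(x_0)$ of $|D\Gamma|$ with respect to Lebesgue measure exists, and by definition $\tau(x_0,f(x_0))\,\rho(x_0)=\nabla\Gamma(x_0)$, so the columns $\tau_1,\tau_2$ span the same plane. This settles rectifiability and the tangent-plane identification on the AC part of $D\Gamma$.

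The main obstacle is the singular part, namely $\Gamma(N)$ together with the image of the support of $D^{s}f$. There no Lipschitz representative is available and an upper $2$-density bound on $|\mu|$ is not automatic. My strategy is to combine the lower bound of Theorem \ref{t:key} with a Vitali-type covering and the push-forward identities \eqref{howmu}, together with the distributional Jacobian identity (Theorem \ref{t:det}) and the degree-theoretic formula (Theorem \ref{t:identity}(a)), to show that blow-ups of $\sigma$ at singular points still converge to $\hausd$ on a $2$-plane, which again is $\operatorname{span}(\tau_{1},\tau_{2})$: at a singular point the columns $\tau_1,\tau_2$ record the principal directions of $D^{s}\Gamma$, and the homeomorphism property forces them to remain linearly independent. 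The most delicate sub-case---as the authors emphasize in the introduction---is the ``doubly vertical'' portion of the graph where both $D^{s}f$ and $D^{s}f^{-1}$ are concentrated; here I would run the degree argument of Section~3 for $f$ and symmetrically for $f^{-1}$ (which is itself $BV$ by \cite{HKO}), and I expect this doubly-singular analysis to be the principal technical bottleneck.
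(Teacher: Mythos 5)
Your high-level strategy is the same as the paper's: reduce to Simon's criterion (Proposition \ref{p:simon}) for $\sigma=|\mu|$, get finite $\hausd$-measure from Theorem \ref{t:keyc}, use Theorem \ref{t:key} for the lower density bound, and dispatch the absolutely continuous part of $D\Gamma$ via the Luzin approximation (Proposition \ref{p:Luzin}) and Lipschitz graphs. The paper in fact splits $\Gamma(\Omega)$ into three pieces according to the Radon--Nikodym derivative $\kappa = d\mu/d\sigma$: the set $M_{12}$ where $\kappa_{12}>0$ (handled exactly as you describe), the set $M^{12}$ where $\kappa^{12}>0$ (handled by passing to $f^{-1}$ via Theorem \ref{t:identity}, a step your decomposition folds into ``the singular part''), and the genuinely doubly-singular set $M^*$ where $\kappa_{12}=\kappa^{12}=0$.

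On $M^*$, however, your proposal has a real gap. You say you would ``combine the lower bound of Theorem \ref{t:key} with a Vitali-type covering and the push-forward identities,'' and ``run the degree argument of Section~3 for $f$ and symmetrically for $f^{-1}$,'' but none of these tools explain \emph{why} a blow-up of $\sigma$ at a point of $M^*$ should be concentrated on a $2$-plane at all. The degree formula of Section~3 serves only to establish Theorem~\ref{t:identity}, and a Vitali covering with the lower density estimate gives $\hausd(\Gamma(E))\lesssim|\mu|(\Gamma(E))$, but neither controls the \emph{geometry} of the blow-up limit; a priori the singular measure $D^s\Gamma$ could be carried by a set with no approximate tangent plane. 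The essential ingredient the paper uses, and your proposal never invokes, is \textbf{Alberti's rank-one theorem}: the Radon--Nikodym derivative of $D^sf$ with respect to $|Df|$ has rank one. This is what forces the off-diagonal components $(\kappa_i^j)$ to have a one-dimensional range, so that after rotating both domain and target one may assume $\kappa_1^1=1$ and all the other $\kappa_i^j$ vanish at the chosen point. That normalization is the starting point of the actual blow-up argument.

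Even granting that, the rest of the $M^*$ analysis in the paper is a substantial quantitative argument your proposal does not attempt: a bootstrapping estimate using test functions of the special product form $\ff_1(y_1)\ff_2(y_2)\eta(x)$ to prove the upper density bound $r^{-2}\sigma(B(0,r)\times[-r,r]^2)\le C$; a compactness argument identifying the blow-up limit of $\Phi_1\circ f_1^r$ as a step function jumping exactly at $x_1=0$ (using Theorem \ref{t:key} once more to pin down the jump location); and a density argument upgrading convergence against product test functions to weak* convergence against all of $\C_c(\er^4)$. Your sentence ``at a singular point the columns $\tau_1,\tau_2$ record the principal directions of $D^{s}\Gamma$, and the homeomorphism property forces them to remain linearly independent'' points vaguely in the right direction but is not a proof: linear independence of the resulting plane directions in the blow-up is a consequence of the displayed analysis, not of the homeomorphism property alone. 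In short, you have identified the framework and correctly flagged $M^*$ as the bottleneck, but you have not supplied the two ideas that actually resolve it --- Alberti's theorem and the quantitative two-sided density estimate via product test functions.
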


\begin{proof}
We use the blow-up idea of De Giorgi's proof of rectifiability 
of the reduced boundary of a set of finite perimeter,
see e.g.\ \cite[Thm. 3.59]{AFP}, however, our situation is more
complicated.
Let $\mu$ be as in \eqref{howmu} and $\sigma=|\mu|$.
Let $\kappa=\frac{d\mu}{d\sigma}$.
Let $M_0$ be the set of all Lebesgue points of $\kappa$
with respect to $\sigma$.
By the Lebesgue-Besicovitch differentiation theorem, 
$\sigma(\Gamma\setminus M_0)=0$. We split $M_0$ as follows:
$$
\aligned
M_{12}&=\{x\in M_0\colon \kappa_{12}(x)>0\},\\
M^{12}&=\{x\in M_0\colon \kappa^{12}(x)>0\},\\
M^*&=\{x\in M_0\colon \kappa_{12}(x)=\kappa^{12}(x)=0\}.
\endaligned
$$
We want to prove $2$-rectifiability of these sets.

By the Luzin approximation (Proposition \ref{p:Luzin}),
there is a set $E\subset\Omega$ such that $|E|=0$
and $\Gamma(\Omega\setminus E)$ 
can be covered by countably many Lipschitz graphs.
Since $\mu_{12}(\Gamma(E))=0$, we have $\kappa_{12}=0$ 
$\sigma$-a.e.\ in $\Gamma(E)$. Therefore 
$\sigma$-almost all of $M_{12}$ is $2$-rectifiable.
By Theorem \ref{t:keyc}, the exceptional set
has $\hausd$-measure zero and thus $M_{12}$ is 
$2$-rectifiable.
We have $D_1\Gamma(x)=(1,0,D_1f_1(x),D_1f_2(x))$ and 
$D_2\Gamma(x)=(0,1,D_2f_1(x),D_2f_2(x))$ a.e. (here the derivatives have the 
meaning of pointwise representatives of the absolutely continuous part),
so that $\kappa$ is a positive multiple of 
$\tau_1\wedge \tau_2$ $\hausd$-a.e.\ on $M_{12}$.

Passing to the inverse using Theorem \ref{t:identity}
we see that also $M^{12}$ is $2$-rectifiable and
 $(\tau_1,\tau_2)$ is a basis
of the approximate tangent plane to $\Gamma(\Omega)$ $\hausd$-a.e.\ 
on $M^{12}$.

We proceed to the set $M^*$. Due to its definition,
$\sigma$ is the push-forward of $|D\Gamma|$ but also of $|Df|$
through $\Gamma$ on
$M^*$, as $\mu_{12}$ and $\mu^{12}$ vanish on $M^*$ and remaining
coordinates are coordinates of $D_jf_i$ upto their sign and 
arrangement. It follows that $\tau_1=(0,0,-\kappa_2^1,-\kappa_2^2)$
and  $\tau_2=(0,0,\kappa_1^1,\kappa_1^2)$ holds $\sigma$-a.e.\ on $M^*$.
We want to verify the assumptions of Proposition \ref{p:simon}. To this end,
observe that for $\sigma$-almost all $(x,y)\in M^*$ we have
\eqn{localis}
$$
\lim_{r\to0^+} \frac{\sigma (B((x,y),r)\setminus M^*)}{\sigma (B((x,y),r)}=0
$$
This is a consequence of the Lebesgue-Besicovitch differentiation 
theorem. Therefore it does not matter that $\sigma$ is not
carried by $M^*$.
By  Alberti' rank-one theorem \cite{Alb}, the Radon-Nikodym derivative
of $D_sf$ with respect to $|Df|$ is rank-one. It follows that the
Radon-Nikodym derivative of $(\mu_i^j)_{i,j=1}^2$ with respect 
to $\sigma$ is also rank-one.
Consequently,  $(\kappa_i^j)_{i,j=1}^2$ is rank-one $\sigma$-a.e.\ in $M^*$.
Pick $(x_0,y_0)\in M^*$ with this rank-one property and such that
$|\kappa(x_0,y_0)|=1$ and \eqref{localis} holds.
For simplicity let us assume that $x_0=y_0=0$.
By a rotation both in
domain and range we may assume that 
\eqn{rotation}
$$
\kappa_1^2(0,0)=\kappa_2^2(0,0)
=\kappa_2^1(0,0)=0,\quad
\kappa_1^1(0,0)=1.
$$
We want to prove that $\ve=\{(x,y)\in\er^2\times\er^2\colon x_2=y_2=0\}$
satisfies the assumptions of Proposition \ref{p:simon}. 
For $r>0$ small enough write
$$
f^r(x)=\frac{f(rx)}{r}.
$$
Consider functions $\ff_1$, $\ff_2\in \C_c^{\infty}(\er)$ and 
denote their indefinite integrals by $\Phi_1$ and $\Phi_2$,
respectively, normalized by $\Phi_i(0)=0$. 
Let $\eta\in \C_c^{\infty}(\er^2)$. Let $B$
be a ball containing the support of $(x,y)\mapsto\eta(x)\ff_1(y_1)\ff_2(y_2)$.

A change of variables yields
\eqn{cv}
$$
\int_{\Gamma(\Omega)}
\ff_1\Big(\frac {\bar y_1}{r}\Big)
\ff_2\Big(\frac {\bar y_2}{r}\Big)
\eta\Big(\frac{\bar x}{r}\Big)
d\mu_1^1(\bar x,\bar y)
=-r^2\langle
D_1f_1^r,\,(\ff_1\circ f^r_1)\;(\ff_2\circ f^r_2)\; 
\eta\rangle
\,,
$$
similarly for other coordinates of $\mu$.

We want to prove that 
$$
r^{-2}\sigma(B(0,r)\times [-r,r]^2)
$$
remains bounded as $r\to 0^+$.
Assume for a while that $\ff_1$, $\ff_2$ and $\eta$ have the shape of 
cut-off functions. For simplicity, assume that 
$\ff_1\in \C_c^{\infty}(\er)$ is even,  $0\le \ff_1\le 1$, 
$\ff_1=1$ on $[-1,1]$, $\ff_1=0$ outside $(-2,2)$, and
$\ff_2=\ff_1$, whereas $\eta(x)=\ff_1(|x|)$ on $\er^2$.

For the estimate, it is enough to consider $r$ such that 
$$
r^{-2}\sigma(B(0,r)\times [-r,r]^2)\ge (2r)^{-2}\sigma(B(0,2r)\times [-2r,2r]^2).
$$
Given $\ep>0$, for $r$ small enough we have
$$
r^{-2}\sigma(B(0,r)\times [-r,r]^2)
\le 2r^{-2}\mu_{1}^1(B(0,r)\times [-r,r]^2)
$$
and
$$
|\mu_1^2|(B(0,2r)\times [-2r,2r]^2)
\le \ep \sigma (B(0,2r)\times [-2r,2r]^2)
$$
by \eqref{rotation}. We estimate
$$
\aligned
r^{-2}\sigma&(B(0,r)\times [-r,r]^2)
\le 2r^{-2}\mu_{1}^1(B(0,r)\times [-r,r]^2)
\\&
\le -2\langle D_1f^r_1, \; (\ff_1\circ f^r_1)\;(\ff_2\circ f^r_2)\; 
\eta\rangle
\\&
=-2\langle D_1(\Phi_1\circ f^r_1), \;(\ff_2\circ f^r_2) \; \eta\rangle
\\&=2\langle D_1(\ff_2\circ f^r_2)), \; (\Phi_1\circ f^r_1)\; \eta\rangle
+2\int_{B} (\Phi_1\circ f^r_1)(\ff_2\circ f^r_2))\,  D_1 \eta\,dx
\\&\le C(r^{-2}|\mu_1^2|(B(0,2r)\times [-2r,2r]^2)+1)
\le C(\ep r^{-2}\sigma (B(0,2r)\times [-2r,2r]^2)+1)
\\&\le 
4C\ep r^{-2}\sigma (B(0,r)\times [-r,r]^2)+ C.
\endaligned
$$
Taking $\ep$ such that $8C\ep<1$ we conclude that 
\eqn{bdd}
$$
r^{-2}\sigma(B(0,r)\times [-r,r]^2)\le C.
$$

Consider $r_j\searrow 0$ and write $f^{(j)}=f^{r_j}$.
In view of \eqref{bdd}, \eqref{cv} and \eqref{rotation} we
observe that 
$$
\langle |Df_2^{(j)}| , \;(\ff_1\circ f^{(j)}_1)(\ff_2\circ f^{(j)}_2) \; \eta\rangle\to 0,
$$
$$
\langle |D_2f_1^{(j)}| , \;(\ff_1\circ f^{(j)}_1)(\ff_2\circ f^{(j)}_2) \; \eta\rangle\to 0.
$$
and
$$
\sup_j\langle |Df_1^{(j)}| , \;(\ff_1\circ f^{(j)}_1)(\ff_2\circ f^{(j)}_2) \; \eta\rangle<\infty.
$$
Hence $|D(\Phi_1\circ f_1^{(j)})|\to 0$ and $\sup_j|D(\Phi_1\circ f_1^{(j)})|<\infty$.
Passing if necessary to a subsequence,
the sequence $g_j:=-\Phi_1\circ f_1^{(j)}$ converges weakly* in $BV(B(0,1))$ and 
strongly in $L^1(B(0,1))$ (in view of compact embedding) to a limit function $g$. 
Since $\|D_2g_j\|\to 0$, the function $g$ depends only on the $x_1$-variable.
Let $R=\Phi_1(2)$. Since the measure of $\{x\colon |g_j(x)|<R\}$ 
is estimated 
by $Cr_j^{-2}\mu_{12}(B(0,2r_j)\times [-2r_j,2r_j]^2)\to 0$,
passing to the limit we  have $|g|=R$ a.e. 
In view of Theorem \ref{t:key}, $(\|Dg_j\|)_j$ is bounded away from $0$
and by \eqref{rotation} we see that the negative parts of $D_1g_j$ tend to $0$.
It follows that there is $c\in\er$ such that $g(x)=-R$ for $x_1<c$
and $g(x)=R$ for $x_1>0$. Assuming that $c>0$, we would get
that $g\equiv -R$ when replacing $r_j$ with $\frac12cr_j$.
This would contradict the estimate of Theorem \ref{t:key}.
Similarly we would exclude $c<0$. Hence $c=0$ and 
$$
g=
\begin{cases}
R,& x_1>0,\\
-R,& x_1<0.
\end{cases}
$$
Now, let $\ff_1$, $\ff_2$ and $\eta$ be general
(this means, smooth with compact support, but not necessarily the 
cut-off functions).
We may assume that the support of $\ff_i$ is contained in $[-1,1]$,
$i=1,2$, and the support of $\eta$ is contained in $B(0,1)$. 
Let, again, $g_j=-\Phi_1\circ f_1^{(j)}$. Then, argumenting as above,
$g_j\to g$ weakly* in $BV(B(0,1))$ and strongly in $L^1(B(0,1))$, where
$$
g=
\begin{cases}
\Phi_1(1),& x_1>0,\\
\Phi_1(-1),& x_1<0.
\end{cases}
$$
Then
$$
\aligned
&-\langle D_1f_1^{(j)} , \;(\ff_1\circ f^{(j)}_1)(\ff_2\circ f^{(j)}_2) 
\; \eta\rangle\to 
(\Phi_1(1)-\Phi_1(-1))\ff_2(0)\int_{\er}\eta(x_1,0)\,dx_1
\\&\quad=
\int_{\er^2}\ff_1(y_1)\ff_2(0)\; \eta(x_1,0)\,dx_1\,dy_1.
\endaligned
$$
Under the notation as in \eqref{blowup} we observe that 
$$
\int_{B(0,1)\times [-1,1]^2}\Psi(x,y)\,d(\mu_1^1)_{0,r_j}(x,y)\to
\int_{x_2=y_2=0}\Psi(x,y)\,d\hausd(x,y),
$$
if $\Psi(x,y)$ is of the form $\ff_1(y_1)\ff_2(y_2)\eta(x)$ with
$\ff_1$, $\ff_2$ and $\eta$ smooth with compact support.
However, linear combinations of such functions are dense
in $\C_0(\er^4)$ and the sequence $((\mu_1^1)_{0,r_j})_j$ is locally bounded,
it follows that $(\mu_1^1)_{0,r_j}\to\hausd\lfloor_{\{x_2=y_2=0\}}$
weakly* on any bounded ball in $\er^4$.
We have proved that from any sequence $r_j$ we can extract a subsequence
that the weak* convergence to the same limit occurs for it.
This is enough to conclude that the weak* convergence occurs as
$r\to 0^+$, since the weak* topology is metrizable on bounded
subsets of $\C_0(\er^4)^*$. Finally, as $\sigma$ behaves asymptotically 
as $\mu_1^1$ at the origin, we conclude that
$$
\int_{B(0,1)\times [-1,1]^2}\Psi(x,y)\,d\sigma_{0,r_j}(x,y)\to
\int_{x_2=y_2=0}\Psi(x,y)\,d\hausd(x,y)
$$
for any test function $\Psi\in \C_c(\er^4)$.

We have shown that the condition of Proposition \ref{p:simon}
is verified at $\sigma$-almost all points $(x,y)\in M^*$.
By Theorem \ref{t:keyc}, it holds at $\hausd$-a.e.\ $(x,y)\in M^*$
and thus $M^*$ is $2$-rectifiable. Getting all together
we conclude that $\Gamma(\Omega)$ is $2$-rectifiable.

\end{proof}

\begin{thm}\label{t:mainpart}
Let $f\colon \Omega\to\er^2$ be a bounded sense-preserving $BV$ homeomorphism
and $\Gamma(\Omega)$ be its graph.
Then $(\Gamma(\Omega),\mu)$ is a $2$-dimensional 1-multiplicity 
rectifiable current.
\end{thm}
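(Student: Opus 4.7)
The plan is to realize $\mu$ as $\xi\,\hausd\lfloor\Gamma(\Omega)$, where $\xi$ is the unit simple $2$-vector orienting the approximate tangent plane to $\Gamma(\Omega)$ provided by Theorem \ref{t:rect}. Writing $\sigma=|\mu|$ and letting $\kappa=d\mu/d\sigma$ be the Radon-Nikodym density (so that $\kappa$ is $\Lambda_2(\er^4)$-valued and $|\kappa|=1$ $\sigma$-a.e.), this amounts to showing two things: $\sigma=\hausd\lfloor\Gamma(\Omega)$, and $\kappa=\xi$ $\sigma$-a.e.\ on $\Gamma(\Omega)$. I would verify both separately on the three pieces $M_{12}$, $M^{12}$, $M^*$ from the proof of Theorem \ref{t:rect}, which cover $\Gamma(\Omega)$ up to an $\hausd$-null set in view of Theorem \ref{t:keyc}.

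On $M_{12}$, the Luzin approximation used in the proof of Theorem \ref{t:rect} places $\sigma$-a.e.\ point on the Lipschitz graph $\Gamma(\Omega\setminus E)$ with $|E|=0$. A direct expansion of $\tau_1\wedge\tau_2$ in the standard basis of $\Lambda_2(\er^4)$ gives coefficients $1$, $\pm D_jf_i$, and $J_f$, matching term by term the pointwise densities in \eqref{howmu}. The classical Lipschitz area formula applied to the injective parametrization $\Gamma$ then produces $\sigma=\hausd\lfloor\Gamma(\Omega)$ and $\kappa=(\tau_1\wedge\tau_2)/|\tau_1\wedge\tau_2|=\xi$ on $M_{12}$. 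The argument on $M^{12}$ is symmetric: via Theorem \ref{t:identity}(a), I can re-express the relevant components of $\mu$ in terms of $f^{-1}$, whereby $M^{12}$ plays the role of $M_{12}$ for the inverse graph, and the Luzin/area-formula argument goes through.

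The delicate case, and the main obstacle, is $M^*$, where neither $f$ nor $f^{-1}$ has a Lipschitz parametrization of the relevant portion of the graph. Here I would reuse the blow-up already carried out in the proof of Theorem \ref{t:rect}. Proposition \ref{p:simon}, whose hypotheses are verified in that proof after the rotation reducing to $\kappa_1^1=1$ and $\kappa_1^2=\kappa_2^1=\kappa_2^2=0$, yields not only rectifiability of $M^*$ but also the identification $\sigma=\hausd\lfloor M^*$. For the equality $\kappa=\xi$ on $M^*$, Alberti's rank-one theorem (invoked in the same proof) forces $(\kappa_i^j)_{i,j}$ to be a rank-one matrix $\sigma$-a.e.; together with $\kappa_{12}=\kappa^{12}=0$ on $M^*$, a short exterior-algebra check shows that any $2$-vector $\sum_{i,j}a_{ij}\,\e_i\wedge\e^j$ with rank-one $(a_{ij})$ is simple, of the form $u\wedge v$ with $u$ in the domain and $v$ in the target copy of $\er^2$. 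Finally, the rotated blow-up of $\mu_1^1$ obtained in the proof of Theorem \ref{t:rect} to $\hausd\lfloor\{x_2=y_2=0\}$ pins down $\kappa=\e_1\wedge\e^1$ at the blow-up point, which is exactly the oriented unit $2$-vector of the tangent plane; together with $|\kappa|=1$ this gives $\kappa=\xi$ $\sigma$-a.e.\ on $M^*$. Combining the three cases yields $\mu=\xi\,\hausd\lfloor\Gamma(\Omega)$, so that in the notation of Definition \ref{d:curr}, $\mu$ is the simple (multiplicity $1$) integer multiplicity rectifiable $2$-current on the oriented rectifiable set $(\Gamma(\Omega),\xi)$.
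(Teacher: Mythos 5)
Your proposal takes a genuinely different route from the paper. The paper fixes a positive parametrization $\psi$ of $\Gamma(\Omega)$ (using Theorem~\ref{t:rect} and the existence of positive parametrizations) and verifies, component by component, that $\mu$ agrees with the current defined by $\psi$: the $\mu_{12}$ equation \eqref{align1} is a change of variables, the $\mu^{12}$ equation \eqref{align2} is Theorem~\ref{t:det}, and the mixed equations \eqref{align3} are outsourced to the known current structure on graphs of scalar $BV$ functions via Federer \cite[4.5.9]{Fe} and \cite[Section 4.1.5]{GMS}. You instead compute $\sigma=\hausd\lfloor\Gamma(\Omega)$ and $\kappa=d\mu/d\sigma$ directly on the three pieces $M_{12}$, $M^{12}$, $M^*$: via the Lipschitz area formula on $M_{12}$, by passing to the inverse on $M^{12}$, and via the blow-up and Alberti's rank-one theorem on $M^*$. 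Both routes are correct; the paper's is shorter because it delegates the delicate $M^*$ analysis to the scalar case, whereas yours makes the geometric mechanism explicit --- in particular, your observation that a rank-one $(\kappa_i^j)$ with $\kappa_{12}=\kappa^{12}=0$ forces $\kappa$ to be a simple $2$-vector $u\wedge v$ with $u$ in the domain copy of $\er^2$ and $v$ in the target copy is a clean and useful way to see why the current is multiplicity-one on $M^*$. One thing to make explicit: on $M^*$ the wedge $\tau_1\wedge\tau_2$ degenerates (the upper $2\times 2$ block of $\tau$ vanishes there and Alberti's theorem makes the lower block rank-one, so $\tau_1$ and $\tau_2$ are parallel), hence the orientation you attribute to Theorem~\ref{t:rect} at the outset cannot be read off $\tau_1\wedge\tau_2$ on $M^*$; in your argument it is really determined by the blow-up, or equivalently by $\kappa$ itself. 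Stating that identification as a definition rather than a verification removes the apparent circularity and closes the argument.
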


\begin{proof}
Consider the orientation making the basis 
$(\tau_1,\tau_2)$ positive on $M$, where
$\tau$ is as in Theorem \ref{t:rect}.
Given a positive parametrization 
$$
\psi=(\psi_1,\psi_2,\psi^1,\psi^2)\colon E\to\Gamma(\Omega),
$$
where $E\subset\er^2$ is measurable,
we need to show that 
\begin{align}
\label{align1}
\int_{E}\det \bigl(\Nabla \psi_1,\Nabla\psi_2\bigr)\,dt&=
\mu_{12}(\psi(E)),\\
\label{align2}
\int_{E}\det \bigl(\Nabla \psi^1,\Nabla\psi^2\bigr)\,dt&=
\mu^{12}(\psi(E)),\\
\label{align3}
\int_{E}\det \bigl(\Nabla \psi_j,\Nabla\psi^i\bigr)\,dt&=
\mu_i^j(\psi(E)),\qquad i,j=1,2.
\end{align}
However, \eqref{align1} is just a change of variables, \eqref{align2}
follows from Theorem \ref{t:det}. Concerning \eqref{align3},
we use the fact that the formula holds for scalar $BV$-functions,
see \cite[4.5.9]{Fe}, \cite[Section 4.1.5]{GMS}.
Let, for example, $i=j=1$. Set $\tilde\psi=(\psi_1,\psi_2,\psi^1)$.
Then $\tilde\psi$ is a Lipschitz mapping to the graph of the 
scalar $BV$-function $f_1$ and thus
$$
\int_{E}\det \bigl(\Nabla \psi_1,\Nabla\psi^1\bigr)\,dt=
-D_2f_1((\psi_1,\psi_2)(E))=\mu_1^1(\psi(E)).
$$
\end{proof}

\begin{thm}\label{t:bdless} 
The current from Theorem \ref{t:mainpart} is boundaryless.
\end{thm}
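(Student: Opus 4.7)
The plan is to verify $\langle\mu,d\boldsymbol\omega\rangle=0$ for every smooth $1$-form $\boldsymbol\omega$ with compact support in $\Omega\times f(\Omega)$. By linearity it suffices to treat the four basic cases $\boldsymbol\omega=\alpha\,dx_i$ and $\boldsymbol\omega=\beta\,dy_j$ with $\alpha,\beta\in\C_c^\infty(\Omega\times f(\Omega))$. In each case, expanding $d\boldsymbol\omega$ in the standard basis of $\Lambda^2(\er^4)$ and pairing against $\mu$ via \eqref{howmu} and Theorem \ref{t:det} produces a sum of three integrals: two against components of $Df$, and one against either Lebesgue measure on $\Omega$ (in the $dx_i$ cases, coming from $\mu_{12}$) or Lebesgue measure on $f(\Omega)$ (in the $dy_j$ cases, coming from $\mu^{12}$, using Theorem \ref{t:det}).

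For instance, if $\boldsymbol\omega=\alpha\,dx_1$ one obtains
$$
\langle\mu,d\boldsymbol\omega\rangle=-\int_\Omega D_{x_2}\alpha(x,f(x))\,dx-\int_\Omega D_{y_1}\alpha(x,f(x))\,dD_2f_1-\int_\Omega D_{y_2}\alpha(x,f(x))\,dD_2f_2.
$$
The critical observation is that $\phi(x):=\alpha(x,f(x))$ is a continuous $BV$ function on $\Omega$ with compact support. The chain rule for continuous $BV$ compositions then gives the identity of measures
$$
D_{x_2}\phi=D_{x_2}\alpha(\cdot,f(\cdot))\,\mathcal L^2+D_{y_1}\alpha(\cdot,f(\cdot))\,D_2f_1+D_{y_2}\alpha(\cdot,f(\cdot))\,D_2f_2,
$$
and since $\phi$ has compact support in $\Omega$ we have $\int_\Omega dD_{x_2}\phi=0$, which is exactly the statement that the three integrals above sum to $0$. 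The case $\boldsymbol\omega=\alpha\,dx_2$ is completely analogous, using $D_{x_1}\phi$.

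For $\boldsymbol\omega=\beta\,dy_j$ I would apply the same chain rule to the continuous $BV$ function $\psi(y):=\beta(f^{-1}(y),y)$ on $f(\Omega)$ (it is compactly supported, and $f^{-1}\in BV_\loc(f(\Omega))$ by \cite{HKO}), obtaining $\int_{f(\Omega)}dD_{y_j}\psi=0$. This identity expresses the Lebesgue integral on $f(\Omega)$ arising in $\langle\mu,d\boldsymbol\omega\rangle$ as a combination of two integrals against components $D_{y_j}(f^{-1})_i$. Applying Theorem \ref{t:identity}(a), these measures push back through $f$ to the corresponding components of $\adj Df$, producing expressions that exactly match (with opposite signs) the two remaining integrals in $\langle\mu,d\boldsymbol\omega\rangle$ coming from $\mu_1^j$ and $\mu_2^j$. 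Hence all three terms cancel and $\langle\mu,d\boldsymbol\omega\rangle=0$.

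The main technical obstacle is justifying the chain rule for continuous $BV$ compositions as an identity of measures on the singular part of $Df$. Because a $BV$ homeomorphism is continuous, $Df$ has no jump part, and the composition $\alpha\circ\Gamma$ is itself continuous; this lets the chain rule be proved cleanly via convolution approximation (smooth $f$ on any relatively compact subdomain containing $\spt\phi$, apply the $C^1$ chain rule, and pass to the limit using uniform convergence of the convolutions together with weak-$*$ convergence of derivatives), bypassing the averaging subtleties of the general Vol'pert rule.
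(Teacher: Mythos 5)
Your proposal is correct and takes essentially the same route as the paper's proof: both reduce the boundaryless identity on basic $1$-forms $\alpha\,dx_i$ to a chain-rule identity for the composition $x\mapsto\alpha(x,f(x))$ (justified by mollification, using continuity of $f$ to avoid jump-set averaging), and both handle $\beta\,dy_j$ by passing to $f^{-1}$ via Theorem~\ref{t:identity}(a). The only cosmetic difference is that the paper restricts to product test functions $\eta(x)\ff(y)$ and then invokes density, whereas you work with general $\alpha(x,y)$ directly.
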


\begin{proof}
Consider a differential form $\omega\,dx_1=\eta(x)\ff(y)\,dx_2$ where
$\eta\in \C_c^{\infty}(\Omega)$
and $\ff\in \C_c^{\infty}(f(\Omega))$.
Then by the chain rule \cite[Thm.\ 3.96]{AFP} we have
$$
\aligned
\int_{\Gamma(\Omega)}d(\omega\,dx_2)&=
\int_{\Gamma(\Omega)}\eta(x)D_1\ff(y)\,dy_1\,dx_2
+\eta(x)D_2\ff(y)\,dy_2\,dx_2
\\&\qquad+\ff(y)D_1\eta(x)\,dx_1\,dx_2
\\&=\langle D_1f_1, \eta (D_1\ff)\circ f\rangle
+\langle D_1f_2, \eta (D_2\ff)\circ f\rangle
+\int_{\Omega}D_1\eta(x)\ff(f(x))\,dx.
\endaligned
$$
The expression on the right hand side vanishes if
$f$ is smooth (and not necessarily invertible), 
it is the standard integration by parts.
In the general case we can use a routine approximation
argument, as we do not need invertibility at this point.
Note that by mollification we obtain uniform convergence
at the part $(D_i\ff)\circ f$ and weak* convergence of the 
gradients. 
As a next step, we observe that linear combinations of functions
$\omega$ of type $\omega(x,y)=\eta(x)\ff(y)$ are dense in
$\C_c^1(\Omega\times f(\Omega))$, so that we obtain
$$
\int_{\Gamma(\Omega)}d(\omega\,dx_2)=0
$$
for the differential forms of type $\omega(x,y)\,dx_2$
with compact support in $\Gamma(\Omega)$. 
Similarly we handle differential forms $\omega(x,y)\,dx_1$.
For differential forms of type 
$\omega(x,y)\,dy_1$ and $\omega(x,y)\,dy_2$ we pass to
the inverse mapping, as we already know the results
of Theorem~\ref{t:identity}.

\end{proof}

\begin{proof}[Proof of Theorem \ref{t:main}]
The assertions are contained in Theorems \ref{t:rect}, \ref{t:mainpart} and 
\ref{t:bdless}.
\end{proof}

\section{Example}

In this section we construct a $\BV$-homeomorphism 
$f\colon [-1,1]^2\to[-1,1]^2$
such that both $f$ and $f^{-1}$ satisfy the $(N)$-condition,
$J_f>0$ a.e., $J_{f^{-1}}>0$ a.e.,
but neither $f$ nor $f^{-1}$ are Sobolev.

\subsection{Auxiliary mappings}
Set
$$
a_k=\frac{1+2^{1-k}}{2},\quad b_k=2^{1-k}.
$$
Then 
$$
\aligned
&1=a_1>a_2>\dots,\qquad \lim_ka_k=\frac12,\\
&1=b_1>b_2>\dots,\qquad \lim_kb_k=0,
\endaligned
$$ 
\eqn{howb}
$$
b_k=2b_{k+1} \text{\quad and \quad }b_k-b_{k+1}=2(a_k-a_{k+1}),\qquad k=1,2,\dots.
$$
We are going to construct a mapping $g_k$ of
$$
P_k:=[-2^{-k}a_k,\,2^{-k}a_k]\times [-2^{-k}b_k,\,2^{-k}b_k]
$$ 
onto itself.
Denote 
$$
\aligned
Q_k&=[-2^{-k}a_{k+1},\,2^{-k}a_{k+1}]\times[-2^{-k}b_{k+1},\,2^{-k}b_{k+1}],\\
\xi_k(x)&=\frac{a_k-2^k|x_1|}{a_k-a_{k+1}},\\
\eta_k(x)&=\frac{b_k-2^k|x_2|}{b_k-b_{k+1}},\\
A_k&=\bigl\{x\in P_k\setminus Q_k\colon \xi_k(x)\ge \eta_k(x)\bigr\},\\
B_k&=\bigl\{x\in P_k\setminus Q_k\colon \xi_k(x)\le \eta_k(x)\bigr\},\\
\mathbf T_k^t&=
\begin{pmatrix}
0,&\frac{a_{k}+t(a_{k+1}-a_k)}{b_{k}+t(b_{k+1}-b_k)}\\
\frac{b_{k}+t(b_{k+1}-b_k)}{a_{k}+t(a_{k+1}-a_k)},&0
\end{pmatrix}.
\endaligned
$$
We set
$$
\aligned
g_k(x)&=(g_k^1(x),g_k^2(x))
=
\begin{cases}
\mathbf T_k^1x,& x\in Q_k,\\
\mathbf T_{k}^{\min\{\xi_k(x),\eta_k(x)\}}x,& x\in P_k\setminus Q_k.\\
\end{cases}
\endaligned
$$
In particular, $g_k(x)=\mathbf T_k^0x$ on $\partial P_k$.
The function $g_k$ is bi-Lipschitz, indeed, $g_k^{-1}=g_k$. Let us estimate the gradient of $g_k$.
If $x\in A_k$, we have 
$$
\aligned
&t:=\min\{\xi_k(x),\eta_k(x)\}=\eta_k(x),\\
&2^k|x_2|=b_{k}+t(b_{k+1}-b_k),\\
&2^k|x_1|\le a_{k}+t(a_{k+1}-a_k)
\endaligned
$$ 
and 
$$
\aligned
g_k^1(x)&=\frac{a_{k}+t(a_{k+1}-a_k)}{b_{k}+t(b_{k+1}-b_k)}\;x_2=\pm2^{-k}\bigl(a_{k}+t(a_{k+1}-a_k)\bigr),\\
g_k^2(x)&=\frac{b_{k}+t(b_{k+1}-b_k)}{a_{k}+t(a_{k+1}-a_k)}\;x_1=\pm2^k \frac{x_1x_2}{a_{k}+t(a_{k+1}-a_k)}.
\endaligned
$$
It follows
$$
\aligned
|\Nabla g_k^1(x)|&\le 2^{-k}(a_k-a_{k+1})|\Nabla \eta_k(x)|\le \frac{a_k-a_{k+1}}{b_k-b_{k+1}}=\frac12,
\\
|\Nabla g_k^2(x)|&\le 2+2^k\frac{|x_1x_2|(a_k-a_{k+1})}{(a_{k}+t(a_{k+1}-a_k))^2}|\Nabla \eta_k(x)|\\
&\le 2+\frac{b_k(a_k-a_{k+1})}{a_{k+1}(b_k-b_{k+1})}\le 3,
\endaligned
$$
so that 
\eqn{naa}
$$
|\Nabla g_k(x)|\le 4,\qquad x\in A_k.
$$
Since 
$$
|A_k|\le 2^{1-2k}a_k(b_k-b_{k+1})\le 2^{2-3k},
$$
we obtain
\eqn{ona}
$$
\int_{A_k}|\Nabla g_k|\,dx\le 2^{4-3k}.
$$
If $x\in B_k$, we have 
$$
\aligned
&t:=\min\{\xi_k(x),\eta_k(x)\}=\xi_k(x),\\
&2^k|x_2|\le b_{k}+t(b_{k+1}-b_k),\\
&2^k|x_1|= a_{k}+t(a_{k+1}-a_k)
\endaligned
$$ 
and 
$$
\aligned
g_k^1(x)&=\frac{a_{k}+t(a_{k+1}-a_k)}{b_{k}+t(b_{k+1}-b_k)}\;x_2=
\pm 2^k \frac{x_1x_2}{b_{k}+t(b_{k+1}-b_k)}\\
g_k^2(x)&=\frac{b_{k}+t(b_{k+1}-b_k)}{a_{k}+t(a_{k+1}-a_k)}\;x_1=
\pm 2^{-k}\bigl(b_{k}+t(b_{k+1}-b_k)\bigr).
\endaligned
$$
It follows
$$
\aligned
|\Nabla g_k^1(x)|&\le 1+\frac{a_k}{b_{k+1}}+
2^k\frac{|x_1x_2|(b_k-b_{k+1})}{(b_{k}+t(b_{k+1}-b_k))^2}|\Nabla \xi_k(x)|
\\&\le 
1+2^k
+
\frac{a_k(b_k-b_{k+1})}{b_{k+1}(a_k-a_{k+1})}\le 2^{k+2},
\\
|\Nabla g_k^2(x)|&\le  2^{-k}(b_k-b_{k+1})|\Nabla \eta_k(x)|\le \frac{b_k-b_{k+1}}{a_k-a_{k+1}}\le 2,\\
\endaligned
$$
so that
\eqn{nab}
$$
|\Nabla g_k(x)|\le 2^{k+3},\qquad x\in B_k.
$$
Since 
$$
|B_k|\le 2^{1-2k}b_k(a_k-a_{k+1})=2^{1-4k},
$$
we obtain
\eqn{onb}
$$
\int_{B_k}|\Nabla g_k|\,dx\le 2^{4-3k}.
$$
Of course
\eqn{nac}
$$
|\Nabla g_k(x)|=\frac{a_{k+1}}{b_{k+1}},\qquad x\in Q_k.
$$
Since $|Q_k|=4^{1-k}a_{k+1}b_{k+1}$, we have
\eqn{onc}
$$
\int_{Q_k}|\Nabla g_k|\,dx = 4^{1-k} a_{k+1}^2\le 4^{1-k}.
$$

\subsection{The construction of the approximating sequence} 
Consider multiindices $\alpha=(\alpha_1,\dots,\alpha_k)\in\{-1,1\}^k$
and $\beta=(\beta_1,\dots,\beta_k)\in \{-1,1\}^k$, $k=1,2,\dots$.
Then we define 
$$
\aligned
u_{\alpha}=\sum_{j=1}^k2^{-j}\alpha_ja_j,
\quad
v_{\alpha}=\sum_{j=1}^k2^{-j}\alpha_jb_j,
\qquad \alpha\in  \{-1,1\}^k,\\
u_{\beta}=\sum_{j=1}^k2^{-j}\beta_ja_j,
\quad
v_{\beta}=\sum_{j=1}^k2^{-j}\beta_jb_j,\qquad \beta\in  \{-1,1\}^k.
\endaligned
$$
Set 
$$
\aligned
X_{\alpha}&=[u_{\alpha}-2^{-k}a_k,\;u_{\alpha}+2^{-k}a_k],\\
X_{\alpha}'&=[u_{\alpha}-2^{-k}a_{k+1},\;u_{\alpha}+2^{-k}a_{k+1}],
 \qquad&\alpha\in  \{-1,1\}^k,\\
Y_{\beta}&=[v_{\beta}-2^{-k}b_k,\;v_{\beta}+2^{-k}b_k], \\
Y_{\beta}'&=[v_{\beta}-2^{-k}b_{k+1},\;v_{\beta}+2^{-k}b_{k+1}], 
\qquad&\beta\in  \{-1,1\}^k.
\endaligned
$$
Now, we are ready to construct our approximating mappings by gluing the copies
of $g_k$. Namely, set
$$
\aligned
P_{\alpha,\beta}&=X_{\alpha}\times Y_{\beta},
\quad
Q_{\alpha,\beta}&=X_{\alpha}'\times Y_{\beta}',
\qquad \alpha,\beta\in\{-1,1\}^k,
\endaligned
$$
which are  translated copies of $P_k$ and $Q_k$, respectively,
and 
\eqn{howfk}
$$
f_k(x)=(u_{\beta},v_{\alpha})+g_k\big(x-(u_{\alpha},v_{\beta})\big),\qquad 
x\in P_{\alpha,\beta},\ \alpha,\beta\in\{-1,1\}^k.
$$
This defines $f_k$ on 
$$
S_k:=\bigcup_{\alpha,\beta\in\{-1,1\}^k}P_{\alpha,\beta}.
$$
We have $S_k=Q_0=[-1,1]^2$ if $k=1$, otherwise we let $f_k=f_{k-1}$
on $Q_0\setminus S_k$.

\subsection{Sobolev estimates} Since the functions $f_k$ on $P_{\alpha,\beta}$ are 
only translates of $g_k$, we use \eqref{ona}, \eqref{onb} and \eqref{onc} to estimate
$$
\aligned
\int_{S_j\setminus S_{j+1}}|\Nabla f_k|\,dx&=\sum_{\alpha,\beta\in\{-1,1\}^j}
\int_{P_{\alpha,\beta}\setminus Q_{\alpha,\beta}}|\Nabla f_j|\,dx 
=4^k\int_{P_j\setminus Q_j}|\Nabla g_j|\,dx
\\&\le 2^{4-j},\qquad j=1,\dots,k,
\endaligned
$$
and
$$
\aligned
\int_{S_{k+1}}|\Nabla f_k|\,dx&\le \sum_{\alpha,\beta\in\{-1,1\}^k}\int_{Q_{\alpha,\beta}}
|\Nabla f_k|\,dx
\\&\le 4^k4^{1-k}=4.
\endaligned
$$
All together
$$
\int_{Q_0}|\Nabla f_k|\,dx\le 20.
$$

\subsection{Passing to the limit}
We see that
the sequence $(f_k)_k$ is bounded in $W^{1,1}(Q_0)$. At the same time,
the sequence converges in $C(Q_0)$ to a continuous mapping $f$. 
It is obvious from the construction that 
$f_k^{-1}=f_k$ for each $k$, so that $f^{-1}=f$, $f$ is a BV homeomorphism.
We have $|S_k|=4^k|P_k|=4^{k}4^{1-k}a_kb_k\le 2^{3-k}$. The intersection
\eqn{theS}
$$
S=\bigcap_kS_k
$$
is a Cantor set of measure zero. 
Now, the function $f$ is locally Lipschitz
on $Q_0\setminus S$ and maps $S$ to itself, so that $f$ satisfies
the Luzin $(N)$-condition. Also it is obvious that $J_f>0$ a.e.\ in $Q_0\setminus S$,
so simply $J_f>0$ a.e.
Now, denote
$$
\aligned
X&=\bigcap_{k=1}^{\infty}\bigcup_{\alpha\in\{-1,1\}^k}X_{\alpha},\\
Y&=\bigcap_{k=1}^{\infty}\bigcup_{\beta\in\{-1,1\}^k}Y_{\beta}.
\endaligned
$$
Then $S=X\times Y$, 
$$
\aligned
|X|&=\lim_{k\to\infty}\sum_{\alpha\in\{-1,1\}^k}|X_{\alpha}|=2\lim_{k\to\infty}a_k=
1,\\
|Y|&=\lim_{k\to\infty}\sum_{\beta\in\{-1,1\}^k}|Y_{\beta}|=2\lim_{k\to\infty}b_k=0.
\endaligned
$$
Consider a vertical segment $\{z_1\}\times [-1,1]$, where $z_1\in X$.
Then the function 
$x_2\mapsto f(z_1,x_2)$ maps $Y$ onto $X$, so that it fails to satisfy the 
Luzin $(N)$-condition, in particular, it fails to be absolutely continuous.
It follows that $f$ is not a Sobolev mapping, in other words, the singular part
of $Df$ is nontrivial.

\begin{proof}[Proof of Theorem \ref{t:example}] The existence
of a function with required properties follows from our construction.
\end{proof}

\end{document}